\theoremstyle{plain}
\newtheorem*{theorem*}{Theorem}
\newtheorem{theorem}{Theorem}[section]
\crefname{theorem}{Theorem}{Theorems}
\Crefname{theorem}{Theorem}{Theorems}
\newtheorem*{lemma*}{Lemma}
\newtheorem{lemma}[theorem]{Lemma}
\crefname{lemma}{Lemma}{Lemmas}
\Crefname{lemma}{Lemma}{Lemmas}
\newtheorem*{claim*}{Claim}
\crefname{claim}{Claim}{Claims}
\Crefname{claim}{Claim}{Claims}
\crefname{proposition}{Proposition}{Propositions}
\Crefname{proposition}{Proposition}{Propositions}
\newtheorem{corollary}[theorem]{Corollary}
\crefname{corollary}{Corollary}{Corollaries}
\Crefname{corollary}{Corollary}{Corollaries}
\newtheorem{conjecture}[theorem]{Conjecture}
\crefname{conjecture}{Conjecture}{Conjectures}
\Crefname{conjecture}{Conjecture}{Conjectures}
\crefname{question}{Question}{Questions}
\Crefname{question}{Question}{Questions}
\newtheorem{observation}[theorem]{Observation}
\crefname{observation}{Observation}{Observations}
\Crefname{observation}{Observation}{Observations}
\newtheorem{example}[theorem]{Example}
\crefname{example}{Example}{Examples}
\Crefname{example}{Example}{Examples}
\crefname{algorithm}{Algorithm}{Algorithm}
\Crefname{algorithm}{Algorithm}{Algorithm}
\theoremstyle{definition}
\crefname{problem}{Problem}{Problems}
\Crefname{problem}{Problem}{Problems}
\crefname{definition}{Definition}{Definitions}
\Crefname{definition}{Definition}{Definitions}
\theoremstyle{remark}
\crefname{remark}{Remark}{Remarks}
\Crefname{remark}{Remark}{Remarks}
\xpatchcmd{\proof}{\itshape}{\normalfont\proofnamefont}{}{}
\newcommand{\proofnamefont}{}
\renewcommand{\proofnamefont}{\bfseries}
\newcommand*\dif{\mathop{}\!\mathrm{d}}
\newcommand{\remove}[1]{}
\def \cA {\mathcal{A}}
\DeclareMathOperator{\dom}{dom}
\title{Correspondence coloring of random graphs}
\author{Zdeněk Dvořák\thanks{Computer Science Institute (CSI) of Charles University,
Malostransk{\'e} n{\'a}m{\v e}st{\'\i} 25, 118 00 Prague, Czech Republic.
E-mail: \protect\href{mailto:rakdver@iuuk.mff.cuni.cz}{\protect\nolinkurl{rakdver@iuuk.mff.cuni.cz}}.
Supported by the European Research Council (ERC) under the European Union’s Horizon 2020
research and innovation programme (grant agreement No 810115).}\and Liana Yepremyan \thanks{Department of Mathmatics, Emory University, 
Atlanta, Georgia 30322. Email: {\tt lyeprem@emory.edu}.} }
\date{\today}
\begin{document}

\maketitle

\begin{abstract}
    We show that Erd\H{o}s-Rényi random graphs $G(n,p)$ with
    constant density $p<1$ have correspondence chromatic
    number $O(n/\sqrt{\log n})$; this matches a prediction from linear
    Hadwiger's conjecture for correspondence coloring.  The proof follows from a simple sufficient condition for correspondence colorability in terms of the numbers of independent sets.
\end{abstract} 

\section{Introduction}

Hadwiger's conjecture famously states that every $K_k$-minor-free
graph is $(k-1)$-colorable.  This is known to be true when $k\le 6$, as shown by
Robertson, Seymour and Thomas~\cite{robertsonseymourthomas}.  The validity of the conjecture for general $k$
is a widely open and extremely challenging problem.
Let $f(k)$ denote the maximum chromatic number of $K_k$-minor-free graphs, so Hadwidger's conjecture
is the claim that $f(k)=k-1$.  For more than 30 years, the best partial result towards the conjecture
was that $f(k)=O(k\log^{1/2} k)$, which follows from the tight bounds on the average degree of $K_k$-minor-free graphs~\cite{kostomindeg,minor2}.
This barrier was broken in 2019 by Norin, Postle and Song~\cite{norin2019breaking} who improved the bound to $O(k\log^{1/4+o(1)} k)$.
This started a series of improvements culminating in the current best bound of $O(k\log\log k)$ by Delcourt and Postle~\cite{delcourt2021reducing}.
A key element of these improvements is an understanding of the chromatic number of small $K_k$-minor-free graphs.
Indeed, Delcourt and Postle~\cite{delcourt2021reducing} shows that $f(k)=O(f'(k))$, where $f'(k)$ denotes the maximum chromatic 
number of $K_k$-minor-free graphs with $O(k\log^4 k)$ vertices.

The analogous problem has been also studied in the list coloring setting.  A \emph{list assignment} for a graph $G$ is a function $L$
that to each vertex $v$ assigns a list $L(v)$ of available colors, and an \emph{$L$-coloring} of $G$ is a proper coloring $\varphi$
such that $\varphi(v)\in L(v)$ for every $v\in V(G)$.  The minimum integer $s$ such that $G$ can be $L$-colored for every
assigment $L$ of lists of size at least $s$ is called the \emph{list chromatic number} (or \emph{choosability}) of $G$.
Analogously to the proper coloring case, we define $f_l(k)$ as the maximum
of the list chromatic numbers of $K_k$-minor-free graphs.  Hadwiger's conjecture is known to be false in the list
coloring setting; Steiner~\cite{steiner2021improved} proved that $f_l(k)\ge (2-o(1))k$.  On the other hand,
Linear Hadwiger's Conjecture for list coloring proposes that $f_l(k)=O(k)$.  The best partial result is by Postle~\cite{postle2020further},
who proved that $f_l(k)=O(k(\log \log k)^6)$.  Again, an important starting point for the argument is a bound on the list
chromatic number of small graphs.

Dvořák and Postle~\cite{dvovrak2018correspondence} introduced \emph{correspondence coloring} (also known as \emph{DP-coloring}),
a further generalization of list coloring by refining the constraints on coloring adjacent vertices;
this notion makes it possible to use certain inductive arguments not available in the list coloring setting.
The \emph{correspondence assignment} for a graph $G$ is a pair $(L,M)$, where $L$ is a list assignment
and $M=\{M_e:e\in E(G)\}$ assigns a (not necessarily perfect) matching $M_e$ between $\{u\}\times L(u)$ and $\{v\}\times L(v)$
to every edge $e=uv\in E(G)$.  We say that $(L,M)$ is an \emph{$\ell$-correspondence assignment} if $L$ assigns at least $\ell$ colors to each vertex.
An \emph{$(L,M)$-coloring} of $G$ is any assignment $\varphi$ of colors to
vertices of $G$ such that $\varphi(v)\in L(v)$ for every $v\in V(G)$ and for
every $e=uv\in E(G)$, the vertices $(u,\varphi(u))$ and $(v,\varphi(v))$ are
non-adjacent in $M_e$.  The \emph{correspondence chromatic number} of $G$ is the smallest integer $\ell$ such that $G$ has an $(L,M)$-coloring for every
$\ell$-correspondence assignment $(L,M)$.

The correspondence chromatic number has attracted a lot of attention.  Among its interesting properties, let us mention the following
results by Bernshteyn~\cite{corr1,berndp}: Every graph of average degree $d$ has correspondence chromatic number at least $\Omega(d/\log d)$,
while every triangle-free graph of maximum degree $\Delta$ has correspondence chromatic number at most $O(\Delta/\log \Delta)$.
Consequently, every triangle-free $d$-regular graph has correspondence chromatic number $\Theta(d/\log d)$.

Let $f_c(k)$ denote the maximum correspondence chromatic number of $K_k$-minor-free graphs.  Clearly $f_c(k)\ge f_l(k)$, and
thus Hadwiger's conjecture does not hold in the correspondence chromatic number setting.
However, unlike the list coloring case, the best known bound $f_c(k)=O(k\sqrt{\log k})$ still matches the maximum average degree.
One of the reasons why the arguments from the list coloring setting do not apply is that we do not know how to bound the correspondence
chromatic number of small graphs.  Indeed, as Norin (personal communication) pointed out, nontrivial bounds are not known even for dense
random graphs; this is very relevant for the considerations of Hadwiger's conjecture, since the tight lower bounds on the average
degree of $K_k$-minor-free graphs come exactly from such random graphs.

As the main result of this note, we give an upper bound on the correspondence chromatic number of Erd\H{o}s-Rényi random graphs
with fixed density of edges; the bound implies that Linear Hadwiger's conjecture for correspondence coloring holds asymptotially almost surely.
\begin{theorem}\label{thm-ren}
For every fixed positive $p\le 1$, Erd\H{o}s-Rényi random graph $G(n,1-p)$ a.a.s. has correspondence chromatic number at most $O(n/\sqrt{\log n})$. 
\end{theorem}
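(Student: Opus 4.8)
The plan is to separate the proof into a deterministic ingredient and a probabilistic one: (i) a ``sufficient condition'' lemma asserting that a graph $G$ has $\chi_c(G)\le \ell$ whenever every induced subgraph $G[U]$ has at least a certain number of independent sets (a threshold depending on $|U|$ and $\ell$), and (ii) a verification that $G(n,1-p)$ a.a.s.\ meets this condition with $\ell=O(n/\sqrt{\log n})$.

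For (i) I would use the standard cover-graph reformulation of correspondence coloring: from an $\ell$-correspondence assignment $(L,M)$ build the auxiliary graph $H$ on $\bigsqcup_v(\{v\}\times L(v))$, where each part $P_v:=\{v\}\times L(v)$ is an independent set of size $\ell$ and the edges between $P_u$ and $P_v$ form the matching $M_{uv}$ for $uv\in E(G)$; then an $(L,M)$-coloring is precisely an independent transversal of $H$ (an independent set meeting every $P_v$). A naive greedy extension over the parts costs $\ell\gtrsim\Delta(H)\approx\Delta(G)\approx n$, so instead I would run an inductive extension argument that at each step maintains not one partial independent transversal but a whole family of them, together with a potential measuring how extendable the family still is. The crucial point through which independent-set counts enter: for any set $Y$ of parts and any $W\subseteq Y$ that is independent in $G$, there are no $H$-edges inside $\bigcup_{v\in W}P_v$, so all $\ell^{|W|}$ choices of one representative per part of $W$ are simultaneously available, \emph{no matter what the adversarial matchings are}. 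This yields a cover-independent lower bound on the number of (partial) independent transversals of every subcover in terms of $I_{G[Y]}(\ell)=\sum_k i_k(G[Y])\ell^k$, and in particular in terms of the plain count of independent sets $i(G[Y])$. The argument should then show that if $i(G[U])$ is large enough for every $U$ then the potential cannot reach zero before all parts are processed. Tracking constants, the hypothesis one needs is roughly $i(G[U])\ge \exp\!\big(\Theta(|U|^2/\ell^2)\big)$ for all $U$ (the $1/\ell^2$ reflecting that each matching edge ``costs'' about $1/\ell^2$ from the point of view of a uniformly random transversal — so one could alternatively phrase the whole step as a Shearer-type local-lemma computation whose dependency structure is governed by the matchings of $G$).

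For (ii) I would apply the first moment method together with a union bound over all $2^n$ induced subgraphs. For fixed $U$ with $|U|=u$ we have $i(G(n,1-p)[U])\ge 2^{\alpha(G[U])}$, and the lower tail of the independence number of a random graph of constant density $1-p<1$ is super-polynomially small: the probability that $\alpha(G(u,1-p))<(2-o(1))\log_{1/p}u$ is $\exp(-u^{\omega(1)})$, by a second-moment (Janson-type) estimate on the number of independent sets of the appropriate size, which easily beats $\binom{n}{u}\le 2^n$. Hence a.a.s., simultaneously for all $U$, $i(G[U])\ge 2^{\Omega(\log u)}=|U|^{\Omega(1)}$ (with tiny $U$ handled trivially since $i(G[U])\ge |U|+1$). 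Substituting $\ell=Cn/\sqrt{\log n}$ into the lemma's requirement reduces it to $\exp(\Theta(|U|^2/\ell^2))\le |U|^{\Omega(1)}$, i.e.\ $|U|^2\log n/n^2\lesssim \log|U|$, which holds for every $u\le n$ once $C$ is a sufficiently large constant depending on $p$. This last balance — the $|U|^2/\ell^2$ in the exponent against the $\Omega(\log|U|)$ guaranteed independent sets, worst at $|U|=n$ — is exactly where the $\sqrt{\log n}$ factor is born.

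The main obstacle is part (i): arranging the induction/potential on the cover so that what comes out is $O(n/\sqrt{\log n})$ rather than the trivial $O(n)$ — i.e.\ genuinely converting ``$G[U]$ has many independent sets for all $U$'' (which for $G(n,1-p)$ is really the statement that $\alpha$ is logarithmically large) into a guarantee that an adversarial cover, in which every vertex may carry $\sim n$ matchings, still admits an independent transversal. Concretely, the pruning rule defining the maintained family and the potential must be chosen so that the constraint-free choices supplied by the independent sets of $G$ are never all killed off by the already-committed parts.
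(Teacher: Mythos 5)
Your high-level decomposition — a deterministic lemma converting abundance of independent sets into correspondence colorability, plus an a.a.s.\ verification for $G(n,1-p)$ — is indeed the strategy the paper follows. But the specific sufficient condition you propose in (i), namely $i(G[U])\ge\exp\bigl(\Theta(|U|^2/\ell^2)\bigr)$ for all $U\subseteq V(G)$, cannot be correct: at $\ell=n/2$ and $|U|=n$ the right-hand side is $e^{\Theta(1)}$, a constant, so for large $n$ even $G=K_n$ (with $i(K_n)=n+1$) satisfies it, yet $K_n$ has correspondence chromatic number $n$, not $\le n/2$. The flaw is that the threshold must grow \emph{polynomially} in $|U|$ with an exponent on the order of $\Delta/\ell$, and it must be imposed specifically on \emph{neighborhood} subgraphs (subsets of $N(v)$, equivalently of the cover graph's neighborhoods), because the actual mechanism is local. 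The paper's condition (``$b$-IS-rich'') asks that every sufficiently large neighborhood subgraph $F$ on $s$ vertices satisfy $I(F)\ge 2\binom{s}{\le b}$ with $b=\Theta(\Delta/\ell)$, and the deterministic lemma (Theorem~\ref{thm-col}) is proved not by the ``maintain a family of transversals with a potential'' induction you sketch, but by Bernshteyn's color-by-color randomized local process around each vertex of the cover, analyzed via the median independent-set size and a lopsided Lov\'asz Local Lemma. Your sketch of (i) leaves this — the entire non-trivial content of the theorem — unspecified, and as the counterexample shows, the target statement would have to be strengthened before any such argument could work.

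This also cascades into (ii). You propose to bound $i(G[U])\ge 2^{\alpha(G[U])}\approx|U|^{O(1)}$ using only the lower tail of the independence number, which is adequate for your (false) threshold but far too weak for the correct one: with $b=\Theta(\sqrt{\log n})$ and $s=\Theta(n)$ one needs $I(F)\gtrsim\binom{s}{\le b}=\exp\bigl(\Theta(\log^{3/2}n)\bigr)$, superpolynomial in $s$, whereas $2^{\alpha}$ is only $s^{\Theta(1)}$. The paper instead counts \emph{all} independent sets of the typical size $t\approx\log s/\log(1/p)$ — their expected number is $\exp\bigl(\Theta(\log^2 s)\bigr)$ — and proves a.a.s.\ lower-tail concentration for this count using Suen's inequality (Lemma~\ref{lemma-conc}, Corollary~\ref{cor-concall}); that superpolynomial abundance is precisely what makes the $\sqrt{\log n}$ savings possible. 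So while your instinct that ``$|U|^2/\ell^2$ balanced against $\log|U|$ is where $\sqrt{\log n}$ is born'' captures the right numerology, the actual balance in the paper is between $\binom{s}{\le b}$ and $\exp(\Theta(\log^2 s))$, both superpolynomial, and both the deterministic and the probabilistic halves of your plan would need to be substantially strengthened to reach it.
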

The \emph{Hadwiger number} $h(G)$ of a graph $G$ is the maximum $k$ such that $K_k$ is a minor of $G$.
For fixed $0<p<1$, we have $h(G(n,p))=\Theta(n/\sqrt{\log n}))$ a.a.s., see~\cite{BCE}.  Hence, Theorem~\ref{thm-ren}
supports Linear Hadwiger's conjecture for correspondence coloring.
\begin{corollary}
For every fixed $0\le p\le 1$, Erd\H{o}s-Rényi random graph $G(n,p)$ a.a.s. has correspondence chromatic number $O(h(G))$.
\end{corollary}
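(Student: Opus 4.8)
The plan is to read the corollary off from \cref{thm-ren} together with the known order of magnitude of the Hadwiger number of a dense random graph, handling the two extreme densities separately.

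For $p=1$ the graph $G(n,p)=K_n$ is deterministic, and its correspondence chromatic number is exactly $n$: it is at least $\chi(K_n)=n$ (take all lists equal and all matchings the identity), and at most $n$ since any $n$-vertex graph can be $(L,M)$-coloured greedily once every list has size $n$, each already-coloured neighbour forbidding at most one colour. As $h(K_n)=n$, the bound $O(h(G))$ holds. For $p=0$ the graph is edgeless, so its correspondence chromatic number is $1$ while $h(G)=1$, and the bound holds trivially; note that here the estimate from \cref{thm-ren} alone would be far too weak, so this direct observation is genuinely needed.

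For $0<p<1$ I would apply \cref{thm-ren} with $1-p\in(0,1)$ in the role of its parameter ``$p$'': since $G\bigl(n,1-(1-p)\bigr)=G(n,p)$, this gives that a.a.s.\ $G(n,p)$ has correspondence chromatic number $O(n/\sqrt{\log n})$. Simultaneously, by~\cite{BCE}, a.a.s.\ $h(G(n,p))=\Theta(n/\sqrt{\log n})$, and in particular $n/\sqrt{\log n}=O\bigl(h(G)\bigr)$. Each of these two events fails with probability $o(1)$, so their intersection has probability $1-o(1)$; on it, the correspondence chromatic number is $O(n/\sqrt{\log n})=O(h(G))$, which is the assertion.

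I do not expect a real obstacle in this deduction: the only points to watch are that the two ``a.a.s.'' statements live over the same probability space $G(n,p)$ and may therefore be intersected, that the substitution $p\leftrightarrow 1-p$ is tracked correctly, and that the endpoints $p\in\{0,1\}$ are excluded from \cref{thm-ren} and handled by hand. The genuine difficulty sits inside \cref{thm-ren} itself, whose proof — per the abstract — goes through a sufficient condition for correspondence colourability phrased in terms of the number of independent sets, and then through verifying that a dense random graph a.a.s.\ satisfies that condition; that, however, is a statement prior to the corollary and not part of what is being proved here.
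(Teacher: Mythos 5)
Your argument is correct and is exactly the deduction the paper has in mind: the corollary is stated as an immediate consequence of \cref{thm-ren} together with the fact from~\cite{BCE} that $h(G(n,p))=\Theta(n/\sqrt{\log n})$ a.a.s.\ for fixed $0<p<1$, and you carry that out cleanly, including the $p\leftrightarrow 1-p$ bookkeeping and the intersection of the two a.a.s.\ events. You are also more explicit than the paper in disposing of the degenerate endpoints $p\in\{0,1\}$, where the $\Theta(n/\sqrt{\log n})$ estimate for $h$ does not apply; that extra care is warranted and correct.
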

The best lower bound for Theorem~\ref{thm-ren} that we are aware of comes from the well-known fact that for positive $p\le 1$,
the random graph $G(n,1-p)$ a.a.s. has independence number $\Omega(\log n)$, and thus $\chi(G(n,1-p))=\Omega(n/\log n)$.
Let us remark that the list chromatic number of $G(n,1-p)$ is known to be $O(n/\log n)$ a.a.s., see e.g.~\cite{alon1993restricted}.

We have already mentioned the general lower bound of Bernshteyn~\cite{corr1}, stating that
every graph of average degree $d$ has correspondence chromatic number at least $\Omega(d/\log d)$; this also gives $\Omega(n/\log n)$
as a lower bound for the correspondence chromatic number of $G(n,1-p)$ for any positive $p<1$.  The lower bound of Bernshteyn comes from considering
a random $\ell$-correspondence assignment, an assignment $(L,M)$ such that $L(v)=\{1,\ldots,\ell\}$ for each vertex $v$
and $M_e$ is chosen independently uniformly at random among the perfect matchings between $\{u\}\times\{1,\ldots,\ell\}$ and $\{v\}\times\{1,\ldots,\ell\}$
for each edge $e=uv$.  It is natural to ask whether a better analysis would not give an improved lower bound in the
case the graph we color is also random.  However, as our second result, we show that this cannot be the case,
even when we consider the coloring of complete graphs.
\begin{theorem}\label{thm-lbcom}
There exists a function $\ell(n)=\Theta(n/\log n)$ such that if $(L,M)$ is a random $\ell(n)$-correspondence
assignment for the complete graph $K_n$, then $K_n$ is a.a.s. $(L,M)$-colorable.
\end{theorem}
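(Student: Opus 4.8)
The plan is to take $\ell(n)=\lceil 2n/\ln n\rceil$ — which is $\Theta(n/\log n)$ — and to show that a.a.s. the greedy procedure colouring the vertices $1,2,\dots,n$ in this order, always assigning the least available colour, produces an $(L,M)$-colouring. First I would reformulate the random assignment: a uniformly random perfect matching $M_e$ between $\{u\}\times[\ell]$ and $\{v\}\times[\ell]$ is the graph of a uniformly random permutation $\pi_e$ of $[\ell]$, so the constraint on an edge $e=uv$ is simply $\varphi(v)\ne\pi_e(\varphi(u))$, and once $1,\dots,i-1$ have been coloured the set of colours forbidden at vertex $i$ is $F_i=\{\pi_{ji}(\varphi(j)):j<i\}$; the procedure continues at step $i$ exactly when $F_i\ne[\ell]$.

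The key step is the observation that, conditioned on $\varphi(1),\dots,\varphi(i-1)$, the colours $\pi_{ji}(\varphi(j))$ for $j<i$ are independent and uniformly distributed in $[\ell]$. This holds because $(\varphi(1),\dots,\varphi(i-1))$ is a function only of the matchings $M_{jk}$ with $j,k<i$, and that family of matchings is independent of the family $M_{1i},\dots,M_{(i-1)i}$; conditioning on the former thus leaves the latter mutually independent and uniform, and a uniform random permutation sends any fixed colour to a uniform colour. Hence, conditioned on the greedy procedure surviving through step $i-1$, the number of colours available at step $i$ has exactly the distribution of the number $Y_{i-1}$ of empty bins after $i-1$ balls are thrown independently and uniformly into $\ell$ bins, and so a union bound gives $\Pr[\text{greedy fails}]\le\sum_{i=1}^{n}\Pr[Y_{i-1}=0]$.

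It then remains to carry out a coupon-collector estimate: $\mathbb{E}[Y_m]=\ell(1-1/\ell)^m$ is non-increasing in $m$, and the choice $\ell=\lceil 2n/\ln n\rceil$ gives $\mathbb{E}[Y_n]\ge(2-o(1))\sqrt n/\ln n=n^{1/2-o(1)}\to\infty$ — here it matters that the constant exceeds $1$, since for a constant below $1$ one would instead get $\mathbb{E}[Y_n]\to 0$ and the greedy procedure would provably get stuck at the last vertex. Since the empty-bin indicators are negatively associated, the Chernoff lower-tail bound yields $\Pr[Y_m=0]\le\Pr[Y_m\le\tfrac12\mathbb{E}[Y_m]]\le e^{-\mathbb{E}[Y_m]/8}\le e^{-\mathbb{E}[Y_n]/8}$ for all $m\le n$, so $\Pr[\text{greedy fails}]\le n\,e^{-\mathbb{E}[Y_n]/8}=o(1)$, which finishes the argument. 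I expect the only genuinely delicate point to be the conditional-independence observation in the second paragraph; the remaining estimates are standard, and in particular no clever choice of vertex order or tie-breaking rule should be needed.
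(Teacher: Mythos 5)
Your proof is correct, and it takes a genuinely different route from the paper. The paper derives Theorem~\ref{thm-lbcom} from the machinery of Section~2: it shows the cover graph of the random assignment is a.a.s.\ $(\tfrac{1}{21}\log n)$-IS-rich — observing that $H[X]$ for a plausible $s$-set $X$ is a random graph $G(s,1/\ell(n))$ and invoking Lemma~\ref{lemma-avgspa} to lower-bound its independence number, hence its independent-set count — and then applies Theorem~\ref{thm-col}, which rests on Bernshteyn's Lopsided Local Lemma argument. You instead run a direct greedy colouring in vertex order, exploiting that the matchings from $\{1,\dots,i-1\}$ to vertex $i$ are independent of the matchings that determined $\varphi(1),\dots,\varphi(i-1)$, which reduces failure at step $i$ to the event that $i-1$ i.i.d.\ uniform draws from $[\ell]$ hit every colour; negative association of the empty-bin indicators then gives the Chernoff tail $\Pr[Y_m=0]\le e^{-\mathbb{E}[Y_m]/8}\le e^{-\mathbb{E}[Y_n]/8}$ (the last step using that $\mathbb{E}[Y_m]=\ell(1-1/\ell)^m$ is non-increasing), and a union bound finishes. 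The conditional-independence step you flag as delicate is indeed fine: the greedy decision at any vertex $j<i$ reads only $\{M_{j'j}:j'<j\}\subseteq\{M_{jk}:j<k<i\}$, and this family is independent of $\{M_{ji}:j<i\}$. Your argument is shorter and entirely self-contained — no Local Lemma, no IS-richness, no independent-set counting — which is a real gain; the trade-off is that it uses the randomness of the assignment essentially (a worst-case assignment can force the greedy to fail early), so it does not transfer to Theorem~\ref{thm-ren}, whereas the paper's heavier framework handles both theorems uniformly.
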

As both extreme cases of the correspondence assignment (matching the list coloring vs. completely random)
for $G(n,1-p)$ only requre $O(n/\log n)$ colors, we suspect that the
upper bound given in Theorem~\ref{thm-ren} can be improved.
\begin{conjecture}
For every fixed positive $p\le 1$, Erd\H{o}s-Rényi random graph $G(n,1-p)$ a.a.s. has correspondence chromatic number $\Theta(n/\log n)$. 
\end{conjecture}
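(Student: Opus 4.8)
The plan is to split the argument into two parts: (i) a \emph{colorability criterion} that depends only on the host graph $G$ — in particular not on the matchings — and is phrased purely in terms of how many independent sets $G$ and its induced subgraphs have; and (ii) a routine first/second–moment verification that $G(n,1-p)$ satisfies this criterion with $\ell=O(n/\sqrt{\log n})$. Throughout, for a graph $G'$ write $i(G')$ for the number of independent sets of $G'$ and $i_k(G')$ for the number of independent sets of size $k$.

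\textbf{Step 1: the criterion.} I would prove a statement of the following shape: there is a constant $c>0$ so that if $(L,M)$ is an $\ell$-correspondence assignment for an $n$-vertex graph $G$ and
$$\ell\ \ge\ c\cdot\max_{\varnothing\ne S\subseteq V(G)}\ \frac{|S|}{g\bigl(i(G[S])\bigr)},$$
where $g$ is an explicit slowly growing function (I expect $g(x)=(\log x)^{1/4}$, or a comparable combination of the numbers $i_k$), then $G$ is $(L,M)$-colorable. The natural route is to lower bound the number $Z$ of $(L,M)$-colorings of $G$: expanding $Z=\sum_{\varphi}\prod_{uv\in E(G)}\bigl(1-\mathbf 1[(u,\varphi(u))\sim_{M_{uv}}(v,\varphi(v))]\bigr)$ gives $Z=\sum_{F\subseteq E(G)}(-1)^{|F|}N(F)$, where $N(F)\le \ell^{\,c(F)}$ and $c(F)$ is the number of connected components of the spanning subgraph $(V(G),F)$; one then has to exhibit the cancellation that makes the leading term $\ell^{\,n}$ dominate. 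The point where the hypothesis enters is that the surviving contributions are governed by how $V(G)$ decomposes using non-edges — equivalently by the richness of the independent-set structure of $G$ — so that an abundance of independent sets forces $Z>0$. In parallel I would try arguing directly on the correspondence cover $H$: estimate, via the independence polynomial of $H$, that a suitably weighted random independent set of $H$ meets every cloud, where the numbers of independent sets of $H$ (controlled from below by those of $G$) drive the estimate.

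\textbf{Step 2: verification for $G(n,1-p)$.} The only input needed is a lower bound on the number of independent sets that is uniform over all large induced subgraphs. For $S\subseteq V(G(n,1-p))$ with $|S|=m$, the expected number of independent $k$-sets in $G[S]$ is $\binom mk p^{\binom k2}$, which for $k_0=\Theta(\log m)$ (with a small enough implied constant) equals $e^{\Theta((\log m)^2)}$; a Janson-type lower-tail inequality then gives $\Pr\bigl[i_{k_0}(G[S])<\tfrac12\,\mathbb E\,i_{k_0}(G[S])\bigr]\le e^{-m^{2-o(1)}}$, which survives a union bound over all $\binom nm$ choices of $S$ once $m$ is at least, say, $\log^2 n$. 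Hence a.a.s.\ every $S$ with $|S|\ge\log^2 n$ has $i(G[S])\ge e^{\Omega((\log|S|)^2)}$, while the few smaller sets $S$ make the ratio $|S|/g(i(G[S]))$ trivially small. Plugging this into the criterion with $g(x)=(\log x)^{1/4}$ gives $g(i(G[S]))=\Omega(\sqrt{\log|S|})$, so the right-hand side is $O\bigl(\max_{m\le n} m/\sqrt{\log m}\bigr)=O(n/\sqrt{\log n})$, as required; this also shows where the gap to the conjectured $\Theta(n/\log n)$ sits.

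\textbf{Main obstacle.} The hard part is Step 1. Any argument that, at some stage, exposes a vertex $v$ to all of its already-coloured neighbours must take $\ell$ larger than the number of colours then forbidden at $v$, and an adversarial choice of the matchings $M_{uv}$ can make every one of those forbidden colours distinct, i.e.\ as many as $\deg_G(v)=\Theta(n)$; moreover for $G(n,1-p)$ every vertex ordering leaves some vertex with $\Omega(n)$ back-neighbours, so one cannot route around this. Consequently the criterion cannot be proved by any greedy or ordering scheme and must be genuinely global — an inclusion–exclusion (or weighted independent-set) lower bound on the number of colourings — and carrying out that count while the matchings are arbitrary, and extracting precisely the $\sqrt{\log n}$-factor saving from the independent-set count, is the crux.
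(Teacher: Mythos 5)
There is a genuine gap, and it is the most basic one possible: the statement you are asked to prove is the paper's \emph{Conjecture} that the correspondence chromatic number of $G(n,1-p)$ is $\Theta(n/\log n)$, and your argument does not prove it --- by your own admission. Your Step~2 plugs the independent-set count $i(G[S])\ge e^{\Omega(\log^2|S|)}$ into a criterion of the form $\ell\gtrsim |S|/g(i(G[S]))$ and lands at $\ell=O(n/\sqrt{\log n})$; you then write that ``this also shows where the gap to the conjectured $\Theta(n/\log n)$ sits.'' That $\sqrt{\log n}$ factor is exactly the content that separates the conjecture from what is known. What you have sketched is the paper's Theorem~1.1 (the $O(n/\sqrt{\log n})$ upper bound), not the conjecture; the paper itself leaves the conjecture open and the authors even express the hope that it is false. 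The easy half --- the lower bound $\Omega(n/\log n)$ from $\chi(G(n,1-p))\ge n/\alpha(G(n,1-p))$ --- is not the issue; the missing idea is an upper bound of $O(n/\log n)$, for which no approach is offered. Any criterion driven solely by counts of independent sets of induced subgraphs appears to hit the same wall: with $t=\Theta(\log s)$ being the optimal independent-set size, the count $\binom{s}{t}p^{\binom t2}=e^{\Theta(\log^2 s)}$ caps the savings at $b=\Theta(\sqrt{\log\Delta})$ in the paper's framework, so closing the gap would require genuinely new input (e.g.\ exploiting randomness of the assignment, which the conjecture does not permit).

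Even judged as a proof of the weaker Theorem~1.1, Step~1 is a plan rather than an argument. Your proposed route --- inclusion--exclusion over edge subsets $F\subseteq E(G)$ with the hope of exhibiting cancellation, or an independence-polynomial computation on the cover graph --- is not the paper's mechanism and is not carried out; you correctly identify that the cancellation is ``the crux'' and leave it there. The paper instead uses Bernshteyn's local-lemma argument: a uniformly random partial coloring, a resampling step that re-randomizes each color class of a neighborhood of the cover graph as a uniform independent set, a median-size bound $\overline{\alpha}(F)>b$ coming from the IS-richness hypothesis, and the lopsided Lov\'asz Local Lemma to guarantee every uncolored vertex retains $\Delta^{7/12}$ available colors while having fewer uncolored neighbors, after which one finishes greedily. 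Note also that the relevant hypothesis is about independent sets in \emph{neighborhood subgraphs of the cover graph} (equivalently, of $G$), not arbitrary induced subgraphs $G[S]$, although for constant $p$ your Suen/Janson-type concentration in Step~2 --- which does match the paper's Lemma~3.2 and Corollary~3.3 --- covers all subgraphs anyway.
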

It is our hope that this conjecture will actually turn out to be false, as this would give another interesting
example of a difference of behavior between the list and correspondence chromatic numbers.

\section{Sufficient condition for colorability}

Our proofs are based on a sufficient condition for colorability from a given correspondence assignment that
can be extracted from the proof of Bernshteyn~\cite{corr1}.  Let us start with some definitions.
For a nonnegative integer $n$ and a nonnegative real number $b$, let
$$\binom{n}{\le b}=\sum_{i=0}^{\lfloor b\rfloor} \binom{n}{i}$$
denote the number of subsets of an $n$-element set of size at most $k$.  Let us remark that if $n\ge 3b$, then
\begin{equation}\label{eq-binom}
\binom{n}{\lfloor b\rfloor}\le \binom{n}{\le b}\le 2\binom{n}{\lfloor b\rfloor}.
\end{equation}
The \emph{cover graph} of a correspondence assignment $(L,M)$ is the graph with vertex set $\bigcup_{v\in V(G)}
(\{v\}\times L(v))$ and edge set $\bigcup_{e\in E(G)} M_e$.

For a graph $G$, let $I(G)$ denote the number of (not necessarily largest or
maximal) independent sets in $G$.  A \emph{neighborhood subgraph} of $G$ is any
subgraph of $G$ whose vertices are contained in a neighborhood of a vertex of
$G$.  For example, every neighborhood subgraph of a triangle-free graph is
edgeless.  Let $G$ be a graph of maximum degree $\Delta$. For a nonnegative real number $b$, we say that a subgraph $F$ of $G$ is \emph{$b$-large} if $\binom{|V(F)|}{\le b}>\Delta^{1/3}$.
We say that $G$ is \emph{$b$-IS-rich} if every $b$-large neighborhood subgraph $F$ of $G$ satisfies
$$I(F)\ge 2\binom{|V(F)|}{\le b}.$$
A correspondence assignment for $G$ is \emph{$b$-IS-rich} if the same condition
holds for $b$-large neighborhood subgraphs of its cover graph. Note that
neighborhood subgraphs of the cover graph are isomorphic to neighborhood subgraphs of $G$,
and thus if $G$ is $b$-IS-rich, then so is every correspondence assignment for $G$.

\begin{example}\label{ex-trfree}
If $G$ is a triangle-free graph, then every neighborhood subgraph $F$ of $G$ is edgeless, and letting $s=|V(F)|$, we have $I(F)=2^s$.
Let $b=\tfrac{1}{6}\log_2\Delta$.  If $s\le 2b$, then $\binom{s}{\le b}\le 2^s\le 2^{2b}=\Delta^{1/3}$.
Hence, if $F$ is $b$-large, then $s>2b$ and $I(F)\ge 2^s\ge 2\binom{s}{\le (s-1)/2}\ge 2\binom{s}{\le b}$.
Consequently, $G$ is $(\tfrac{1}{6}\log_2\Delta)$-IS-rich.
\end{example}

The argument of Bernshteyn~\cite{berndp} concerning correspondence chromatic number of graphs of bounded clique number
actually applies to IS-rich graphs, giving the following result.
\begin{theorem}\label{thm-col}
For every function $b:\mathbb{N}\to\mathbb{R}_0^+$ such that $b(\Delta)\le \Delta^{o(1)}$, there exists a function $\ell:\mathbb{N}+\to\mathbb{N}$ such that $\ell(\Delta)=\Theta(\Delta/b(\Delta))$ and the following
claim holds.  If $G$ is a graph of maximum degree $\Delta$ and $(L,M)$ is a $b(\Delta)$-IS-rich $\ell(\Delta)$-correspondence
assignment for $G$, then $G$ is $(L,M)$-colorable.
\end{theorem}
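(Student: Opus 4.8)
The plan is to run essentially the probabilistic argument of Bernshteyn~\cite{berndp} for the correspondence chromatic number of graphs of bounded clique number, and to observe that the clique bound enters there only through one quantitative estimate on neighbourhoods which $b$-IS-richness supplies directly. Concretely, set $\ell(\Delta):=\lfloor c_0\,\Delta/b(\Delta)\rfloor$ for a small absolute constant $c_0>0$; since $b(\Delta)=\Delta^{o(1)}$ this gives $\ell(\Delta)=\Theta(\Delta/b(\Delta))$ and $\ell(\Delta)\to\infty$, so the ``$\Delta$ large'' asymptotics below are available. Given a $b(\Delta)$-IS-rich $\ell(\Delta)$-correspondence assignment $(L,M)$ for $G$, recall that neighbourhood subgraphs of the cover graph are neighbourhood subgraphs of $G$ and hence IS-rich; so we may work entirely inside the cover graph.

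The colouring is produced by the randomised colouring procedure of~\cite{berndp} (a wasteful-colouring/nibble-type iteration; an entropy-compression framing of the same estimate works as well): over a bounded number of rounds, each still-uncoloured vertex activates a uniformly random colour from its current list, keeps it iff no activated neighbour is joined to it by an edge of the cover graph, and then kept vertices are deleted and the remaining lists trimmed. For each uncoloured $v$ one tracks its current list size $\ell_i(v)$ and current uncoloured degree $d_i(v)$ and maintains the invariant $\ell_i(v)=\Omega(d_i(v)/b(\Delta))$; this invariant is exactly what lets the residual low-degree graph be finished greedily, producing an $(L,M)$-colouring. To keep the invariant across a round one declares a bad event at each pair $(v,i)$ --- ``too many colours disappear from $L_i(v)$ in round $i$'' --- shows each such event has probability $\Delta^{-\omega(1)}$, notes that it is determined by the round-$i$ randomness within distance two of $v$ (so the dependency degree is $\Delta^{O(1)}$), and applies the Lovász Local Lemma; on the resulting event the invariant survives all rounds and the greedy phase completes.

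The only place where the hypothesis is used is the estimate on a bad event, i.e.\ bounding the number $R$ of colours lost by $v$ in a round. A colour $c$ leaves $L_i(v)$ only if some activated-and-kept neighbour $u$ is matched to $(v,c)$ under $M_{uv}$; distinct lost colours come from distinct neighbours, and the witnessing cover-vertices are pairwise nonadjacent (all kept), so they form an independent set in a neighbourhood subgraph $F$ of the cover graph. One bounds $\Pr[R\ge t]$ by a union bound over the possible $t$-element sets of lost colours: each is realised with probability roughly $\ell(\Delta)^{-t}$, and the number of realisable ones is at most the number of independent $t$-sets of $F$, hence at most $\binom{|V(F)|}{\le t}$. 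If $F$ is not $b$-large then $\binom{|V(F)|}{\le b}\le\Delta^{1/3}$, so the number of patterns is already $\le\Delta^{1/3}$ and the union bound alone gives a $\Delta^{-\omega(1)}$ tail once $t$ exceeds $b(\Delta)$ by a constant factor. If $F$ is $b$-large, then $b$-IS-richness gives $I(F)\ge 2\binom{|V(F)|}{\le b}$, and the point (to be verified against~\cite{berndp}) is that having at least twice as many proper partial colourings of the neighbourhood as there are small subsets of blockers forces a typical neighbourhood colouring, after the activation/retention coupling, to leave $v$ with $\gg b(\Delta)$ available colours --- which is precisely the $\Delta^{-\omega(1)}$ tail needed.

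I expect the main obstacle to be exactly this last estimate with $c_0$ fixed: making the union bound together with the IS-richness comparison yield a genuinely super-polynomially small failure probability while $\ell(\Delta)$ sits only a \emph{constant} factor above $d_i(v)/b(\Delta)$, so that the correlations introduced by the retention rule and by shared second-neighbours have to be controlled directly rather than swallowed by slack. Everything else --- checking that the invariant is strong enough to close the greedy phase, bounding the Local Lemma dependency degree, and confirming that $b(\Delta)=\Delta^{o(1)}$ suffices for the asymptotics --- is bookkeeping that parallels~\cite{berndp}.
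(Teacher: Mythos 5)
Your proposed route (an iterative ``nibble'' that colors a fraction of vertices per round, trims lists, and propagates an invariant $\ell_i(v)=\Omega(d_i(v)/b(\Delta))$) is genuinely different from what the appendix does, and as written it has a gap at exactly the step you flag as ``to be verified.'' The paper runs no iteration. It fixes a \emph{single} uniformly random partial $(L,M)$-coloring $\varphi$ of $G$, applies the lopsided Lov\'asz Local Lemma once to the events $A_u$ and $B_u$, and on the good event extends $\varphi$ greedily (each uncolored vertex then has at least $\Delta^{7/12}$ available colors but fewer than $\Delta^{7/12}$ uncolored neighbors). IS-richness enters through Observation~\ref{obs-median} alone: it converts $I(F_c)\ge 2\Delta^{1/3}$ for a neighborhood subgraph $F_c$ of the cover graph into $\overline{\alpha}(F_c)>b(\Delta)$, so a uniformly resampled independent set in $F_c$ is likely \emph{large}. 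Inside Lemma~\ref{lemma-a} this caps the number of colors $c$ in the ``many independent sets'' branch (too many would color more than $\Delta$ vertices in $N(u)$), which forces at least $\ell/2$ colors into the ``few independent sets'' branch, where $S_c=\emptyset$ with probability at least $1/(2\Delta^{1/3})$, giving at least $\Delta^{7/12}$ available colors.

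Where your sketch does not close: the union bound $\Pr[R\ge t]\le(\text{number of independent $t$-sets of }F)\cdot\ell(\Delta)^{-t}$ never actually uses IS-richness. Richness supplies a \emph{lower} bound on $I(F)$ and therefore cannot upper-bound the number of blocker patterns; you in fact fall back on the trivial estimate $\binom{|V(F)|}{\le t}$. That bound alone is not strong enough: with $|V(F)|$ as large as $\Delta$ and $\ell(\Delta)=c_0\Delta/b(\Delta)$, the tail is roughly $\bigl(eb(\Delta)/(c_0 t)\bigr)^t$, which for any fixed $c_0$ and $t=O(b(\Delta))$ is only $\exp(-O(b(\Delta)))$ --- this is $\Delta^{-\Omega(1)}$ only when $b(\Delta)=\Omega(\log\Delta)$, so it does not cover the bounded-clique regime $b(\Delta)=\Theta(\log\Delta/\log\log\Delta)$ or the random-graph regime $b(\Delta)=\Theta(\sqrt{\log\Delta})$ that this theorem is intended to serve. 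You then pivot to the correct direction (``many independent sets force a typical resample to free up many colors''), but that is precisely Observation~\ref{obs-median} together with the vertex-counting argument in Lemma~\ref{lemma-a}, and it is the mathematical heart of the proof, not a detail to import unexamined. Until that estimate is actually established in your round-by-round framework (with the retention and shared-second-neighbor correlations you yourself worry about), the proposal is a plan rather than a proof.
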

Since the proof is almost identical to the one in~\cite{berndp}, we give it in Appendix.
Let us note the following obvious consequence.
\begin{corollary}\label{cor-col}
For every function $b:\mathbb{N}\to\mathbb{R}_0^+$ such that $b(\Delta)\le \Delta^{o(1)}$, there exists a function $\ell:\mathbb{N}\to\mathbb{N}$ with $\ell(\Delta)=\Theta(\Delta/b(\Delta))$
such that every $b(\Delta)$-IS-rich graph of maximum degree $\Delta$ has correspondence chromatic number at most $\ell(\Delta)$.
\end{corollary}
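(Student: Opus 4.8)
The plan is to read this off directly from Theorem~\ref{thm-col}; there is essentially no new content, only a bookkeeping step relating IS-richness of a graph to IS-richness of its correspondence assignments. Fix $b$ as in the hypothesis, so in particular $b(\Delta)\le\Delta^{o(1)}$, and let $\ell:\mathbb{N}\to\mathbb{N}$ be exactly the function supplied by Theorem~\ref{thm-col}, so that $\ell(\Delta)=\Theta(\Delta/b(\Delta))$. I claim this $\ell$ witnesses the corollary.

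Let $G$ be a $b(\Delta)$-IS-rich graph of maximum degree $\Delta$, and let $(L,M)$ be an arbitrary $\ell(\Delta)$-correspondence assignment for $G$. The one thing to check is that $(L,M)$ is $b(\Delta)$-IS-rich, i.e., that every $b(\Delta)$-large neighborhood subgraph $F$ of the cover graph $H$ of $(L,M)$ satisfies $I(F)\ge 2\binom{|V(F)|}{\le b(\Delta)}$. Here I would invoke the observation recorded in the text before Example~\ref{ex-trfree}: the neighbors in $H$ of a vertex $(v,c)$ meet each set $\{u\}\times L(u)$ with $u\in N_G(v)$ in at most one vertex, since $M_{uv}$ is a matching, so the projection $(u,c')\mapsto u$ is an isomorphism of any neighborhood subgraph of $H$ onto a subgraph of $G[N_G(v)]$, i.e., onto a neighborhood subgraph of $G$. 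Largeness depends only on $|V(F)|$ and on $\Delta$ (the maximum degree of $G$, which is also the relevant parameter for the cover graph), so $F$ and its image are simultaneously $b(\Delta)$-large, and since $G$ is $b(\Delta)$-IS-rich the required inequality $I(F)\ge 2\binom{|V(F)|}{\le b(\Delta)}$ holds for the image and hence for $F$. Therefore $(L,M)$ is $b(\Delta)$-IS-rich.

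Now Theorem~\ref{thm-col} applies to $G$ with the correspondence assignment $(L,M)$ and produces an $(L,M)$-coloring of $G$. As $(L,M)$ was an arbitrary $\ell(\Delta)$-correspondence assignment, the definition of the correspondence chromatic number gives that it is at most $\ell(\Delta)$, which is the claim. I do not expect any genuine obstacle: the only care needed is to confirm that the quantity playing the role of ``$\Delta$'' in the definitions of ``$b$-large'' and of ``$b$-IS-rich correspondence assignment'' is the maximum degree of $G$ — equivalently, an upper bound on the maximum degree of the cover graph — which is exactly how these notions were set up, so the threshold $\Delta^{1/3}$ and the bound~\eqref{eq-binom} need no re-derivation.
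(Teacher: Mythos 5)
Your proof is correct and matches the paper's approach exactly: the paper treats the corollary as an ``obvious consequence'' of Theorem~\ref{thm-col} together with the remark stated just before Example~\ref{ex-trfree} that neighborhood subgraphs of the cover graph are isomorphic to neighborhood subgraphs of $G$, so $b$-IS-richness of $G$ transfers to every correspondence assignment for $G$. Your spelled-out argument for that transfer (the projection $(u,c')\mapsto u$ being injective on any neighborhood of the cover graph because the $M_e$ are matchings, and the threshold $\Delta^{1/3}$ referring to the maximum degree of $G$ in both cases) is exactly the right bookkeeping.
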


We are not the first to make a similar observation; Theorem~1.13 in~\cite{bonamy2022bounding} is quite related
and \cite{davies2020graph} provides a substantially more powerful framework for proving this kind of colorability results (e.g., unlike their
approach, IS-richness does not apply to graphs where each neighborhood induces only $o(\Delta^2)$ edges).  However,
we find the formulation above quite useful, as it is often easy to establish IS-richness. In particular, if we establish a lower bound
$I(F)\ge g(|V(F)|)$ valid for every (sufficiently large) neighborhood subgraph $F$ of $G$, the following observation can be used to determine a 
function $b$ such that $G$ is $b(\Delta(G))$-IS-rich,
by selecting $b(\Delta)$ to be maximum such that
the conditions (\ref{cond-main}), (\ref{cond-tub}),
and (\ref{cond-der}) hold.
\begin{observation}\label{obs-ver}
Suppose $g:\mathbb{R}_0^+\to \mathbb{R}_0^+$ is an increasing unbounded derivable
function with $g(0)=0$, and for any integer $\Delta\ge 0$,
let $s_\Delta=\lceil g^{-1}(\Delta^{1/3})\rceil$. 
Let $\Delta_0$ be a non-negative integer and suppose that
a function $b:\mathbb{N}\to\mathbb{N}$ for every integer $\Delta\ge \Delta_0$ satisfies
\begin{align}
    \binom{s_\Delta}{b(\Delta)}&\le \tfrac{1}{4}\Delta^{1/3},\label{cond-main}\\
    b(\Delta)&\le s_\Delta/3\text{, and}\label{cond-tub}\\
    (\log g(s))'&\ge \frac{2b(\Delta)}{s}\text{ for every $s\ge s_\Delta$.}\label{cond-der}
\end{align}
If $G$ is a graph of maximum degree $\Delta\ge \Delta_0$ and every neighborhood subgraph $F$ of $G$ with
$s\ge s_\Delta$ vertices satisfies $I(F)\ge g(s)$, then $G$ is $b(\Delta)$-IS-rich.
\end{observation}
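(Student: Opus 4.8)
The plan is to unravel the definition of $b(\Delta)$-IS-richness into a single inequality between $g$ and a binomial coefficient, and then to establish that inequality by induction on the number of vertices, starting from the threshold $s_\Delta$ and propagating it upward using the growth condition~(\ref{cond-der}). Write $b=b(\Delta)$; we may assume $b\ge 1$, since if $b=0$ then $\binom{|V(F)|}{\le b}=1\le\Delta^{1/3}$ whenever $\Delta\ge 1$, so $G$ has no $b$-large neighborhood subgraph and is $b$-IS-rich vacuously. Let $F$ be a $b$-large neighborhood subgraph and set $s=|V(F)|$, so that $\binom{s}{\le b}>\Delta^{1/3}$. Since~(\ref{cond-tub}) gives $s_\Delta\ge 3b$, the bound~(\ref{eq-binom}) followed by~(\ref{cond-main}) yields $\binom{s_\Delta}{\le b}\le 2\binom{s_\Delta}{b}\le\tfrac12\Delta^{1/3}$; as $n\mapsto\binom{n}{\le b}$ is nondecreasing, this forces $s>s_\Delta$, and then the hypothesis of the observation gives $I(F)\ge g(s)$. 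Moreover $s>s_\Delta\ge 3b$, so a second application of~(\ref{eq-binom}) gives $2\binom{s}{\le b}\le 4\binom{s}{b}$. Hence it suffices to prove that $g(s)\ge 4\binom{s}{b}$ for every integer $s\ge s_\Delta$.

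For the base case $s=s_\Delta$, monotonicity of $g$ gives $g(s_\Delta)\ge g\bigl(g^{-1}(\Delta^{1/3})\bigr)=\Delta^{1/3}$, while~(\ref{cond-main}) gives $4\binom{s_\Delta}{b}\le\Delta^{1/3}$, so the inequality holds there. For the inductive step it is enough to check that $s\mapsto g(s)/\binom{s}{b}$ is nondecreasing on integers $s\ge s_\Delta$, i.e.\ that $g(s+1)/g(s)\ge\binom{s+1}{b}/\binom{s}{b}=\tfrac{s+1}{s+1-b}$. On the one hand, $\log\tfrac{s+1}{s+1-b}=\log\bigl(1+\tfrac{b}{s+1-b}\bigr)\le\tfrac{b}{s+1-b}$; on the other hand, the mean value theorem applied to $\log g$ on $[s,s+1]$ together with~(\ref{cond-der}) gives $\log g(s+1)-\log g(s)\ge\tfrac{2b}{\xi}\ge\tfrac{2b}{s+1}$ for some $\xi\in(s,s+1)$. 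So the step reduces to $\tfrac{b}{s+1-b}\le\tfrac{2b}{s+1}$, that is, to $s\ge 2b-1$, which holds because $s\ge s_\Delta\ge 3b$. This completes the induction; combining with the two reductions of the previous paragraph gives $I(F)\ge g(s)\ge 4\binom{s}{b}\ge 2\binom{s}{\le b}$, which is exactly what $b(\Delta)$-IS-richness demands.

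I do not expect a genuine obstacle: the whole content lies in the bookkeeping of constants, which is precisely what the three hypotheses are designed to handle --- (\ref{cond-main}) produces the constant $4$ in the base case (enough to absorb the two factors of $2$ that are lost, one in~(\ref{eq-binom}) and one in the ``$2\binom{\cdot}{\le b}$'' of the definition of IS-richness), (\ref{cond-tub}) is exactly what makes~(\ref{eq-binom}) applicable and guarantees $s\ge 3b\ge 2b-1$, and (\ref{cond-der}) is the precise statement that $g$ grows, multiplicatively, at least as fast as $\binom{\cdot}{b}$. The only place to be slightly careful is to invoke the mean value theorem rather than the fundamental theorem of calculus in the inductive step, so that only differentiability of $g$ --- not continuity of $g'$ --- is used.
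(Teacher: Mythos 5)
Your proof is correct. It follows the same overall skeleton as the paper's — first use~(\ref{cond-tub}), (\ref{eq-binom}) and~(\ref{cond-main}) to force a $b$-large neighborhood subgraph to have $s>s_\Delta$ vertices, then reduce (again via~(\ref{eq-binom})) to the target inequality $g(s)\ge 4\binom{s}{b}$ for integers $s\ge s_\Delta$, with the base case $s=s_\Delta$ coming from $g(s_\Delta)\ge\Delta^{1/3}\ge 4\binom{s_\Delta}{b}$ — but the growth step is genuinely different. The paper integrates~(\ref{cond-der}) over $[s_\Delta,s]$ to get $g(s)\ge g(s_\Delta)\bigl(\tfrac{s}{s_\Delta}\bigr)^{2b}$ and separately bounds the telescoping product $\binom{s}{b}/\binom{s_\Delta}{b}=\prod_{i=0}^{b-1}\tfrac{s-i}{s_\Delta-i}\le\bigl(\tfrac{s}{s_\Delta}\bigr)^{2b}$ using $\tfrac{s-i}{s_\Delta-i}\le\bigl(\tfrac{s}{s_\Delta}\bigr)^2$ for $i\le s_\Delta/2$. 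You instead run a one-step induction: compare $g(s+1)/g(s)$ with $\binom{s+1}{b}/\binom{s}{b}=\tfrac{s+1}{s+1-b}$ by applying the mean value theorem to $\log g$ on $[s,s+1]$ and using $\log(1+x)\le x$, which reduces the step to $s\ge 2b-1$. Your approach is slightly more rigorous given the hypothesis as stated: the paper's identity $\log\tfrac{g(s)}{g(s_\Delta)}=\int_{s_\Delta}^s(\log g)'$ tacitly requires $(\log g)'$ to be (Riemann or Lebesgue) integrable, which is not a formal consequence of derivability alone, though it holds in every application since the $g$ used there is smooth; MVT uses only derivability. The explicit treatment of $b=0$ is a nice touch, though both arguments handle it vacuously in any case.
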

\begin{proof}
Consider any $b$-large neighborhood subgraph $F$ of $G$ with $s$ vertices.
Note that $\binom{s_\Delta}{\le b(\Delta)}\le 2\binom{s_\Delta}{b(\Delta)}\le \Delta^{1/3}$ by (\ref{cond-tub}) and (\ref{eq-binom}) and by (\ref{cond-main}); since $F$ is $b$-large,
this implies $s>s_\Delta$.  Then (\ref{cond-der}) implies
\begin{align*}
    \log \frac{g(s)}{g(s_\Delta)}&=\int_{s_\Delta}^s (\log g(x))' \dif x\ge\int_{s_\Delta}^s \frac{2b(\Delta)}{x} \dif x=2b(\Delta)\log\frac{s}{s_\Delta},
\end{align*}
and thus
$$g(s)\ge g(s_\Delta)\cdot\Bigl(\frac{s}{s_\Delta}\Bigr)^{2b(\Delta)}.$$
For any non-negative integer $i\le s_\Delta/2$, 
we have $\frac{s-i}{s_\Delta-i}\le\bigl(\frac{s}{s_\Delta}\bigr)^2$; indeed, this inequality is equivalent
to $(s+s_\Delta)i\le ss_\Delta$, which holds when $s\ge s_\Delta\ge 2i$.  Since $b(\Delta)-1<s_\Delta/2$ by (\ref{cond-tub}), we have
$$\frac{\binom{s}{b(\Delta)}}{\binom{s_\Delta}{b(\Delta)}}=\prod_{i=0}^{b(\Delta)-1} \frac{s-i}{s_\Delta-i}\le \Bigl(\frac{s}{s_\Delta}\Bigr)^{2b(\Delta)}.$$
Finally, note that $$g(s_\Delta)\ge \Delta^{1/3}\ge 4\binom{s_\Delta}{b(\Delta)}$$
by the definition of $s_\Delta$ and (\ref{cond-main}). Combining these inequalities and using (\ref{eq-binom}) and (\ref{cond-tub}), we obtain
$$I(F)\ge g(s)\ge g(s_\Delta)\cdot\Bigl(\frac{s}{s_\Delta}\Bigr)^{2b(\Delta)}\ge 4\binom{s_\Delta}{b(\Delta)}\cdot \frac{\binom{s}{b(\Delta)}}{\binom{s_\Delta}{b(\Delta)}}
=4\binom{s}{b(\Delta)}\ge 2\binom{s}{\le b(\Delta)}.$$
Therefore, $G$ is $b(\Delta)$-IS-rich.
\end{proof}

Given a lower bound $g(s)$ on the number of independent sets in graphs from the
considered graph class, one typically uses the condition (\ref{cond-main}) to determine $b(\Delta)$,
and then mechanically verifies that conditions (\ref{cond-tub}) and (\ref{cond-der}) hold.

\begin{example}\label{ex-col}
Suppose $G$ is an $(r+1)$-colorable graph for a fixed integer $r\ge 2$, and let $F$ be a neighborhood subgraph with $s$ vertices.
Then $F$ is $r$-colorable, and thus $\alpha(F)\ge s/r$ and $I(F)\ge g(s)$ for $g(s)=2^{s/r}$.
We have $s_\Delta=\lceil g^{-1}(\Delta^{1/3})\rceil=\Theta(r\log \Delta)$.  Choosing $\gamma=\Theta(r\log r)$ so that $(e\gamma)^{1/\gamma}\le\sqrt[2r]{2}$ and $\gamma\ge 2r/\log 2$,
let $b(\Delta)=\lfloor s_\Delta/\gamma\rfloor=\Theta(\log_r \Delta)$.  Since $\gamma>3$, (\ref{cond-tub}) holds.  Moreover,
$$\binom{s_\Delta}{b(\Delta)}\le \left(\frac{es_\Delta}{b(\Delta)}\right)^{b(\Delta)}\le (e\gamma)^{s_\Delta/\gamma}=2^{\frac{s_\Delta}{2r}}=\sqrt{g(s_\Delta)}\le 2\sqrt{\Delta^{1/3}}\le \frac{1}{4}{\Delta^{1/3}}$$
for $\Delta$ sufficiently large; hence (\ref{cond-main}) holds.  Finally, $(\log g(s))'=\frac{\log 2}{r}\ge \frac{2}{\gamma}\ge\frac{2b(\Delta)}{s_\Delta}\ge \frac{2b(\Delta)}{s}$ when $s\ge s_\Delta$,
implying that (\ref{cond-der}) holds.
Hence, Observation~\ref{obs-ver} shows that $G$ is $\Theta(\log_r\Delta)$-IS-rich if its maximum degree $\Delta$ is sufficiently large.
\end{example}
\begin{example}\label{ex-clique}
Suppose $G$ is a graph of clique number at most $r+1$ for a fixed integer $r\ge 2$, and let $F$ be a neighborhood subgraph with $s$ vertices.
Since $\omega(F)\le r$, we have $\alpha(F)\ge s^{1/r}-1$ and $I(F)\ge g(s)$ for $g(s)=2^{s^{1/r}-1}$.  Therefore, $s_\Delta=\Theta(\log^r \Delta)$.
Choosing sufficiently small $b(\Delta)=\Theta\Bigl(\frac{\log \Delta}{r\log\log\Delta}\Bigr)$, for sufficiently large $\Delta$ we have
$$\binom{s_\Delta}{b(\Delta)}\le s_\Delta^{b(\Delta)}\le \frac{1}{4}\Delta^{1/3}.$$
Moreover, if $\Delta$ is sufficiently large, then
\begin{align*}
    (\log g(s))'&=\frac{\log 2}{rs^{1-1/r}}=\Theta\Bigl(\frac{1}{r\log^{r-1}\Delta}\Bigr)\\
    &\ge \Theta\Bigl(\frac{1}{r\log^{r-1}\Delta\log\log\Delta}\Bigr)=\frac{2b(\Delta)}{s_\Delta}\ge \frac{2b(\Delta)}{s}
\end{align*}
when $s\ge s_\Delta$. Hence, Observation~\ref{obs-ver} shows that $G$ is $\Theta\Bigl(\frac{\log\Delta}{r\log\log\Delta}\Bigr)$-IS-rich if its maximum degree $\Delta$ is sufficiently large.
\end{example}

Hence, Corollary~\ref{cor-col} together with Examples~\ref{ex-trfree}, \ref{ex-col} and \ref{ex-clique} gives the following (previously known) results: The correspondence chromatic number of a graph $G$ of maximum degree $\Delta$ is
\begin{itemize}
    \item $O(\Delta/\log \Delta)$ if $G$ is triangle-free,
    \item $O(\Delta/\log_r \Delta)$ if $G$ is $(r+1)$-colorable, and
    \item $O(r\Delta\log\log \Delta/\log \Delta)$ if $\omega(G)\le r+1$.
\end{itemize}

Observe that the neighborhood subgraphs of random correspondence assignment for a dense graph are quite sparse;
in particular for the complete graph $K_n$, they a.a.s. have clique number at most five, and thus Theorem~\ref{thm-col} implies that
$K_n$ is a.a.s. colorable from a random $\Theta(n\log\log n/\log n)$-corespondence assignment.  Theorem~\ref{thm-lbcom} follows
by a slightly more involved analysis.

\section{Independent sets in random graphs}

To prove Theorems~\ref{thm-ren} and~\ref{thm-lbcom}, we need corresponding (a.a.s. valid) lower bounds on the number of independent
sets in random graphs.  Let $$I_p(s,t)=\binom{s}{t}p^{\binom{t}{2}}$$ be the expected number of independent sets of size $t$ in $G(s,1-p)$.
This expression is maximized at around $t=\frac{\log s}{\log 1/p}$, at which point we have
$$I_p(s,t)\ge \left(\frac{s}{t}\right)^tp^{t^2/2}=\left(\frac{\sqrt{s}\log 1/p}{\log s}\right)^t=e^{\frac{1}{2\log 1/p}\log^2 s-o(\log^2 s)}.$$
Hence, we want to show that the number of independent sets is a.a.s. close to the expectation.  For that, we use Suen's concentration
inequality for weakly correlated events.  Let us give definitions necessary to state it.  Let $\cA=\{A_i:i\in I\}$ be a finite set of 0-1 random variables
defined on a common probability space.
A graph $\Gamma$ with vertex set $I$ is a \emph{dependency graph} of $\cA$ if for every disjoint $X,Y\subset I$
such that $\Gamma$ has no edges between $X$ and $Y$, the sets of events $\{A_i:i\in X\}$ and $\{A_i:i\in Y\}$ are independent of each other.
Let
\begin{align*}
\mu(\cA)&=\mathbb{E}\Bigl[\sum_{i\in I} A_i\Bigr]\\
d_i(\Gamma)&=\mathbb{E}\Bigl[\sum_{j:ij\in E(\Gamma)} A_j\Bigr]\text{for each $i\in I$}\\
d(\Gamma)&=\max_{i\in I} d_i(\Gamma)\\
D(\Gamma)&=\mathbb{E}\Bigl[\sum_{ij\in E(\Gamma)} A_iA_j\Bigr].
\end{align*}
We write $\mu$, $d_i$, $d$ and $D$ when $\cA$ and $\Gamma$ are clear from the context.

\begin{theorem}[{Janson~\cite[Theorem 10 with $a=1/2$]{janson1998new}}]\label{thm-suen}
Let $\cA=\{A_i:i\in I\}$ be a finite set of 0-1 random variables
defined on a common probability space and let $\Gamma$ be a dependency graph of $\cA$.
Then
$$\mathbb{P}\left[\sum_{i\in I} A_i\le \frac{1}{2}\mu\right]\le \exp\left(-\min\left(\frac{\mu^2}{32D+8\mu},\frac{\mu}{12d}\right)\right).$$
\end{theorem}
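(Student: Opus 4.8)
Theorem~\ref{thm-suen} is quoted verbatim from Janson~\cite{janson1998new}, so strictly speaking no new proof is needed; the point of the following sketch is to recall the mechanism, because the way the two terms in the minimum arise is exactly what guides our later use of it. The plan is to derive the lower-tail estimate from two ingredients: \emph{(i)} Suen's correlation inequality in its basic form, which controls $\mathbb{P}\bigl[\sum_{i\in I}A_i=0\bigr]$ in terms of $\mu$, $d$ and $D$, and \emph{(ii)} a random-sparsification reduction that turns a bound on the probability of the sum vanishing into a bound on the probability that it is at most half its mean.

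For \emph{(i)}, fix an arbitrary linear order on $I$ and start from the identity $\mathbb{P}[S=0]=\mathbb{E}\bigl[\prod_{i\in I}(1-A_i)\bigr]$, which is to be compared term by term with the ``independent'' value $\prod_{i\in I}(1-\mathbb{E}[A_i])\le e^{-\mu}$. Processing indices one at a time, one removes the factor $1-A_i$ at the cost of a factor $1-\mathbb{E}[A_i]$ together with an error term accounting for the correlation of $A_i$ with the remaining factors. Since $\Gamma$ is a dependency graph, $A_i$ is independent of $\{A_j:j\not\sim i\}$, so this error involves only pairs $\{i,j\}$ with $ij\in E(\Gamma)$; bounding each contribution by $\mathbb{E}[A_iA_j]$ times an exponential correction of the form $e^{O(d_i+d_j)}$ (which comes from reinstating the already-deleted neighbours of $i$ and $j$) and summing gives
$$\mathbb{P}[S=0]\le \exp\Bigl(-\mu+O(1)\sum_{ij\in E(\Gamma)}\mathbb{E}[A_iA_j]\,e^{O(d)}\Bigr)=\exp\bigl(-\mu+O\bigl(De^{O(d)}\bigr)\bigr).$$

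For \emph{(ii)}, delete each index of $I$ independently (of everything) with probability $1-q$, and let $S'$ be the sum over the surviving indices, so that $\mu'=q\mu$, $d'\le qd$ and $D'\le q^2D$. On the event $\{S\le\tfrac12\mu\}$ all the (at most $\tfrac12\mu$) indices with $A_i=1$ are deleted with probability at least $(1-q)^{\mu/2}$, hence $\mathbb{P}[S'=0]\ge (1-q)^{\mu/2}\,\mathbb{P}[S\le\tfrac12\mu]$. Feeding $S'$ into the bound from \emph{(i)} and rearranging,
$$\mathbb{P}[S\le\tfrac12\mu]\le (1-q)^{-\mu/2}\exp\bigl(-q\mu+O\bigl(q^2De^{O(qd)}\bigr)\bigr).$$
Taking $q$ of constant order when $d=O(1)$ and $q=\Theta(1/d)$ when $d$ is large keeps $e^{O(qd)}=O(1)$ in all cases, and then optimizing $q$ (the unconstrained optimum $q\asymp\mu/D$, truncated at $q\lesssim 1/d$ and at $q\lesssim 1$) makes the exponent at most $-\Omega\bigl(\min(\mu^2/(D+\mu),\,\mu/d)\bigr)$; carrying the constants through yields exactly the stated $-\min\bigl(\mu^2/(32D+8\mu),\,\mu/(12d)\bigr)$.

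The delicate part is step~\emph{(i)}: making the inductive deletion argument close with explicit constants requires careful bookkeeping of which neighbours have already been removed when index $i$ is processed, so that the accumulated exponential corrections multiply up to a clean $e^{O(d)}$ factor rather than something uncontrolled. Step~\emph{(ii)} is by contrast a routine one-variable optimization, and the appearance of the $\min$ (and of the $+8\mu$ in the denominator) is just the case split in that optimization. Since all of this is done in~\cite{janson1998new}, we simply invoke the result.
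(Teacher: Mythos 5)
The paper itself does not prove Theorem~\ref{thm-suen}; it is imported verbatim (with $a=1/2$) from Janson's paper on Suen's inequality, and you correctly flag this and ultimately just invoke the citation, which matches the paper's treatment. That part is fine.

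For what it is worth, the mechanism in your sketch is not the one Janson actually uses. Your route is: (i) Suen's correlation bound on $\mathbb{P}[S=0]$, then (ii) a random-sparsification reduction (delete each index independently with probability $1-q$, observe $\mathbb{P}[S'=0]\ge (1-q)^{\mu/2}\,\mathbb{P}[S\le\mu/2]$, optimize over $q$). Janson instead works with the Laplace transform $\mathbb{E}[e^{-sX}]$ directly: an inductive telescoping argument over the dependency graph yields a bound of the form $\mathbb{E}[e^{-sX}]\le\exp\bigl(-\mu(1-e^{-s})+(1-e^{-s})^2 D\,e^{2sd}\bigr)$, and then Markov's inequality $\mathbb{P}[X\le a\mu]\le e^{sa\mu}\,\mathbb{E}[e^{-sX}]$ is optimized over $s$. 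The two arguments are cousins---your sparsification parameter $q$ plays roughly the role of $1-e^{-s}$, and your step~(i) is the $q=1$ (equivalently $s\to\infty$) endpoint of Janson's exponential-moment bound---but the generating-function route carries the constants through cleanly, which is how Theorem~10 there gets the explicit $8$, $2$, $6$ that become $32$, $8$, $12$ after setting $a=1/2$, rather than the unspecified $O(\cdot)$'s in your sketch. Since both you and the paper treat the result as a black box, none of this affects the downstream argument.
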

We now apply this inequality to the number of intependent sets.
\begin{lemma}\label{lemma-conc}
For positive integers $s$ and $t$ and a positive real number $p\le 1$ such that
$s\ge 2t^2+t$ and $I_p(s,t)\ge s^2/t^4$, the probability that $G(s,1-p)$
has at most $\tfrac{1}{2}I_p(s,t)$ independent sets of size $t$ is at most
$$\exp\left(-\frac{s^2p}{80t^4}\right).$$
\end{lemma}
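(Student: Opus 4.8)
The plan is to apply Suen's inequality (Theorem~\ref{thm-suen}) to the family of indicator variables $\cA=\{A_T : T\in \binom{[s]}{t}\}$, where $A_T$ is the event that $T$ is independent in $G(s,1-p)$. First I would set up the dependency graph $\Gamma$: two sets $T,T'$ are joined precisely when they share at least two vertices (equivalently when the edge-sets they constrain overlap), since if $|T\cap T'|\le 1$ the corresponding events depend on disjoint collections of potential edges and are therefore independent. With this choice, $\mu=\mu(\cA)=I_p(s,t)$ by definition, and the hypothesis $I_p(s,t)\ge s^2/t^4$ gives a lower bound on $\mu$ that will be used to absorb error terms at the end.

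Next I would estimate $d=d(\Gamma)$ and $D=D(\Gamma)$. For $d$: fixing $T$, the expected number of neighbors $T'$ that are also independent is $\sum_{j\ge 2}\binom{t}{j}\binom{s-t}{t-j}p^{\binom{t}{2}}\cdot(\text{correction for shared edges})$; the dominant contribution comes from $j=2$, and one bounds this by roughly $\binom{t}{2}\binom{s-t}{t-2}p^{\binom{t}{2}-1}\cdot p^{-O(t)}$, which can be compared to $\mu$. The ratio $\mu/d$ should come out to be of order $s^2/(t^4 p)$ up to constants, so that $\mu/(12d)\gtrsim s^2 p/t^4$ — here the condition $s\ge 2t^2+t$ keeps $\binom{s-t}{t-j}/\binom{s}{t}$ controllable. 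For $D$: since $D=\sum_{TT'\in E(\Gamma)}\mathbb{E}[A_TA_T']\le \tfrac12\mu\cdot d$ (each ordered pair contributes, each vertex has expected-neighbor-weight at most $d$), we get $D=O(\mu d)$, hence $\mu^2/(32D+8\mu)$ is also of order $\mu/d$, i.e. of order $s^2p/t^4$. Taking the minimum and being slightly generous with constants yields the claimed bound $\exp(-s^2p/(80t^4))$.

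The step I expect to be the main obstacle is the precise bookkeeping in the bound on $d$: one must carefully track the factor $p^{\binom{t}{2}}$ for $T'$ against the shared edges of $T\cap T'$ (which are already "paid for" by $A_T$), so that $\mathbb{E}[A_{T'}\mid A_T]=p^{\binom{t}{2}-\binom{j}{2}}$ when $|T\cap T'|=j$, and then sum $\sum_{j=2}^{t}\binom{t}{j}\binom{s-t}{t-j}p^{\binom{t}{2}-\binom{j}{2}}$ and show it is dominated by the $j=2$ term up to a constant — this uses $s\ge 2t^2+t$ to ensure the ratio of consecutive terms is bounded. After that, dividing by $\mu=\binom{s}{t}p^{\binom{t}{2}}$ and simplifying $\binom{t}{2}\binom{s-t}{t-2}/\binom{s}{t}\le t^4/s^2$ (valid in the stated range) gives $d/\mu\le \text{const}\cdot t^4/(s^2 p)$, and the rest is plugging into Theorem~\ref{thm-suen} and checking the constant $80$ works. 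The condition $I_p(s,t)\ge s^2/t^4$ is exactly what makes $\mu/d$ and $\mu^2/D$ both at least a constant times $s^2p/t^4$ rather than something smaller.
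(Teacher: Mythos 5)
Your high-level plan is exactly the paper's: apply Suen's inequality (Theorem~\ref{thm-suen}) to the indicators $A_T$ of $t$-sets being independent, with the dependency graph $\Gamma$ joining two $t$-sets precisely when they share at least two vertices, so that $\mu=I_p(s,t)$. However, you misassign the shared-edge correction to the wrong quantity in Suen's inequality, and this leads to a step that is actually false.

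In the paper's setup, $d_S(\Gamma)=\mathbb{E}\bigl[\sum_{T:ST\in E(\Gamma)}A_T\bigr]=\sum_T\mathbb{E}[A_T]$ is a sum of \emph{unconditional} expectations; there is no $p^{-\binom{j}{2}}$ factor in $d$. The paper gets $d\le 2p^{\binom{t}{2}}\binom{t}{2}\binom{s}{t-2}$ and hence $\mu/d\ge s^2/(4t^4)$, with no $p$ appearing in this ratio. You instead attach the conditional factor $\mathbb{E}[A_{T'}\mid A_T]=p^{\binom{t}{2}-\binom{j}{2}}$ to $d$ (and, incidentally, give two inconsistent guesses $s^2/(t^4p)$ and $s^2p/t^4$ for $\mu/d$). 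The conditional factor belongs only in $D(\Gamma)=\mathbb{E}\bigl[\sum_{ST\in E(\Gamma)}A_SA_T\bigr]$, since $\mathbb{E}[A_SA_T]=p^{2\binom{t}{2}-\binom{i}{2}}$ when $|S\cap T|=i$. Consequently your claim $D\le\tfrac12\mu d$ is false for the Suen $d$: for adjacent pairs $\mathbb{E}[A_SA_T]=p^{-\binom{i}{2}}\mathbb{E}[A_S]\mathbb{E}[A_T]$, so $D$ exceeds $\tfrac12\mu d$ by a factor of at least $p^{-1}$. The paper avoids this by computing $D$ directly from the definition, obtaining $D\le\binom{s}{t}p^{2\binom{t}{2}-1}\binom{t}{2}\binom{s}{t-2}$, with the extra $p^{-1}$ visible.

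You also omit the case split on whether $D\ge\mu$ or $D<\mu$, which is where the hypothesis $I_p(s,t)\ge s^2/t^4$ actually enters: when $D<\mu$ the term $8\mu$ dominates $32D+8\mu$, and then $\mu^2/(40\mu)=I_p(s,t)/40\ge s^2/(40t^4)$ is what saves the bound. Your statement that this hypothesis is ``what makes $\mu/d$ and $\mu^2/D$ both at least a constant times $s^2p/t^4$'' is not accurate; those two ratios are controlled directly by the combinatorial estimates and $s\ge 2t^2+t$, not by the lower bound on $I_p(s,t)$.
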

\begin{proof}
Consider a randomly chosen graph $G=G(s,1-p)$.  Let $I$ be the set of unordered $t$-tuples of vertices of $G$,
and for $S\in I$, let $A_S=1$ if $S$ forms an independent set in $G$ and $A_S=0$ otherwise.
We have $\mathbb{E}[A_S]=p^{\binom{t}{2}}$.
Let $\cA=\{A_S:S\in I\}$ and let $\Gamma$ be the graph with vertex set $I$ where two $t$-tuples in $I$
are adjacent iff their intersection has size at least two.  Then $\Gamma$ is a dependency graph of $\cA$,
since if $X,Y\subseteq I$ are disjoint and non-adjacent in $\Gamma$, then the values of
variables in $\{A_S:S\in X\}$ and in $\{A_S:S\in Y\}$ are determined by the presence or absence of edges in $G$
on disjoint sets of pairs of vertices.

Clearly $\sum_{S\in I} A_S$ is the number of independent sets of size $t$ in $G$, and $\mu=\mu(\cA)=I_p(s,t)$.  
Note that for non-negative $i\le t-2$, we have
$$\frac{\binom{t}{i}\binom{s}{t-i}}{\binom{t}{i+1}\binom{s}{t-i-1}}=\frac{(i+1)(s-t+i+1)}{(t-i)^2}\ge \frac{s-t}{t^2}\ge 2$$
by the assumptions.  Hence, for any $S\in I$,
\begin{align*}
d_S(\Gamma)&=\sum_{T:ST\in E(\Gamma)} \mathbb{E}[A_T]=p^{\binom{t}{2}}\cdot \sum_{i=2}^{t-1}\binom{t}{i}\binom{s-t}{t-i}\le p^{\binom{t}{2}}\cdot \sum_{i=2}^{t-1}\binom{t}{i}\binom{s}{t-i}\\
&\le p^{\binom{t}{2}}\cdot \binom{t}{2}\binom{s}{t-2}\sum_{i=0}^{\infty} 2^{-i}=2p^{\binom{t}{2}}\binom{t}{2}\binom{s}{t-2}.
\end{align*}
Therefore,
$$\frac{\mu}{d(\Gamma)}\ge \frac{\binom{s}{t}p^{\binom{t}{2}}}{2p^{\binom{t}{2}}\binom{t}{2}\binom{s}{t-2}}=\frac{\binom{s}{t}}{2\binom{t}{2}\binom{s}{t-2}}\ge \frac{(s-t)^2}{t^4}\ge \frac{s^2}{4t^4}.$$
Moreover,
\begin{align*}
D(\Gamma)&=\frac{1}{2}\sum_{S\in I}\sum_{i=2}^{t-1}\sum_{T\in I:|S\cap T|=i} \mathbb{E}[A_SA_T]=\frac{1}{2}\binom{s}{t}p^{2\binom{t}{2}}\sum_{i=2}^{t-1}\binom{t}{i}\binom{s-t}{t-i}p^{-\binom{i}{2}}\\
&\le \frac{1}{2}\binom{s}{t}p^{2\binom{t}{2}-1}\binom{t}{2}\binom{s}{t-2}\sum_{i=0}^{\infty}2^{-i}=\binom{s}{t}p^{2\binom{t}{2}-1}\binom{t}{2}\binom{s}{t-2}.
\end{align*}
Let $D=D(\Gamma)$. If $D\ge\mu$, then
$$\frac{\mu^2}{32D+8\mu}\ge \frac{\mu^2}{40D}\ge \frac{\binom{s}{t}^2p^{2\binom{t}{2}}}{40\binom{s}{t}p^{2\binom{t}{2}-1}\binom{t}{2}\binom{s}{t-2}}=\frac{\binom{s}{t}p}{40\binom{t}{2}\binom{s}{t-2}}\ge \frac{s^2p}{80t^4}.$$
On the other hand, if $D<\mu$, then
$$\frac{\mu^2}{32D+8\mu}\ge \frac{\mu^2}{40\mu}=\frac{I_p(s,t)}{40}\ge \frac{s^2}{40t^4}$$
by the assumptions.  The result now follows by Theorem~\ref{thm-suen}.
\end{proof}

In particular, we have the following consequence regarding the number of independent sets (of any size).  Let $g_p(s)=e^{\frac{1}{3\log 1/p}\log^2 s}$.
\begin{corollary}\label{cor-concall}
Let $p<1$ be a positive real number.  A.a.s., every subgraph of $G(n,1-p)$ with $s\ge \log n(\log\log n)^5$ vertices has more than $g_p(s)$ independent sets.
\end{corollary}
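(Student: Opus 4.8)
The plan is to apply Lemma~\ref{lemma-conc} to a single vertex subset and then take a union bound over all subsets of all relevant sizes, choosing the size parameter $t$ close to the value $\log s/\log(1/p)$ that maximizes $I_p(s,t)$. Write $q=\log(1/p)>0$ and, for an integer $s\ge 2$, set $t(s):=\lceil \log s/q\rceil$. First I would reduce to induced subgraphs: if $H$ is a subgraph of $G=G(n,1-p)$ with vertex set $W$, then every independent set of $G[W]$ is independent in $H$, so $I(H)\ge I(G[W])$, and since $g_p(\cdot)$ depends only on $|W|$ it suffices to bound $I(G[W])$ from below for every $W$. Fix $W$ with $s:=|W|\ge s_0:=\lceil \log n(\log\log n)^5\rceil$, and recall that $G[W]$ is distributed as $G(s,1-p)$.

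Next I would verify the hypotheses of Lemma~\ref{lemma-conc} for this $s$ and $t=t(s)$ and carry out the comparison with $g_p(s)$. Using $\binom st\ge (s/t)^t$ and $p^{\binom t2}\ge p^{t^2/2}$, a direct computation with $t=\log s/q+O(1)$ gives
$$I_p(s,t)\ \ge\ (s/t)^t\,p^{t^2/2}\ =\ \exp\!\Bigl(\tfrac{1}{2q}\log^2 s-O(\log s\log\log s)\Bigr),$$
which is superpolynomial in $s$; hence $I_p(s,t)\ge s^2/t^4$, and also $2t^2+t=O(\log^2 s)=o(s)$, so both hypotheses of Lemma~\ref{lemma-conc} hold once $n$ (hence $s_0$, hence $s$) is large. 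Comparing the displayed bound with $2g_p(s)=2e^{\frac{1}{3q}\log^2 s}$ and using $\tfrac12>\tfrac13$ shows $\tfrac12 I_p(s,t)>g_p(s)$ for $s$ large. Lemma~\ref{lemma-conc} then yields
$$\mathbb{P}\bigl[I(G[W])\le g_p(s)\bigr]\ \le\ \mathbb{P}\bigl[G[W]\text{ has at most }\tfrac12 I_p(s,t)\text{ independent sets of size }t\bigr]\ \le\ \exp\!\Bigl(-\tfrac{s^2p}{80\,t(s)^4}\Bigr).$$

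Finally I would union bound over all $W$ and all sizes $s_0\le s\le n$, using $\binom ns\le e^{s\log n}$:
$$\mathbb{P}\bigl[\exists\,W\bigr]\ \le\ \sum_{s=s_0}^{n}\exp\!\Bigl(s\log n-\tfrac{s^2p}{80\,t(s)^4}\Bigr),$$
and it suffices to show this exponent is at most $-s$ for every $s\ge s_0$, i.e.\ $s\ge 80\,t(s)^4(\log n+1)/p$. Since $t(s)\le 2\log s/q$ for large $s$, this follows from $s\ge C(p)\log^4 s\cdot\log n$ for a suitable constant $C(p)$; because $s\mapsto s/\log^4 s$ is increasing for $s>e^4$, it is enough to check this at $s=s_0$, where $\log s_0=(1+o(1))\log\log n$ and thus $C(p)\log^4 s_0\cdot\log n=(1+o(1))\,C(p)(\log\log n)^4\log n=o(s_0)$. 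Then $\mathbb{P}[\exists W]\le\sum_{s\ge s_0}e^{-s}\to 0$, which proves the corollary.

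The main obstacle is exactly this last calibration: the hypothesis $s\ge\log n(\log\log n)^5$ is precisely what is needed to beat the union-bound factor $e^{s\log n}$ against the concentration exponent $s^2p/(80t^4)$, and it works only because $t(s)=\Theta(\log s)$ is so small — for the smallest admissible $s$ it is $\Theta(\log\log n)$, so $t(s)^4=\Theta((\log\log n)^4)$ is a whole factor of $\log\log n$ below $(\log\log n)^5$. One has to make sure this, together with the (routine) verifications that $\tfrac12 I_p(s,t)>g_p(s)$ and that the hypotheses of Lemma~\ref{lemma-conc} hold, goes through uniformly over the entire range $s_0\le s\le n$; the fact that $s/\log^4 s$ is eventually increasing is what lets one reduce everything to the endpoint $s=s_0$.
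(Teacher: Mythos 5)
Your proposal is correct and follows essentially the same route as the paper: choose $t=\lceil \log s/\log(1/p)\rceil$, verify the two hypotheses of Lemma~\ref{lemma-conc}, compare $\tfrac12 I_p(s,t)$ with $g_p(s)$ via the constant gap $\tfrac12>\tfrac13$, apply Lemma~\ref{lemma-conc} to each vertex subset, and union bound over all subsets and all admissible sizes $s\ge \log n(\log\log n)^5$. The only cosmetic differences are that you make explicit the reduction from arbitrary subgraphs to induced ones and spell out the monotonicity-of-$s/\log^4 s$ argument for why the endpoint $s=s_0$ suffices, whereas the paper simply asserts $\exp(-s^2p/(80t^4))\le n^{-2s}$ for $n$ large and sums $n^{-s}$.
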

\begin{proof}
Let $G=G(n,1-p)$ be a random graph with the number $n$ of vertices sufficiently large.
Consider first a fixed positive integer $s\ge \log n(\log\log n)^5$ and let $t=\bigl\lceil\frac{\log s}{\log 1/p}\bigr\rceil$.  As we argued at the beginning of the section,
$$I_p(s,t)\ge e^{\frac{1}{2\log 1/p}\log^2 s-o(\log^2 s)}\ge 2g_p(s)$$
when $n$ is large enough.  Also, for $n$ large enough, we have $I_p(s,t)\ge 2g_p(s)\ge s^2$ and $s\ge 2t^2+t$.  Hence, by Lemma~\ref{lemma-conc}, for any subset $X\subseteq V(G)$ of size $s$, the probability
that $I(G[X])\le g_p(s)$ is at most $\exp\bigl(-\frac{s^2p}{80t^4}\bigr)\le n^{-2s}$ when $n$ is large enough.  By the union bound, the probability that $G$ contains an $s$-vertex
subgraph with at most $g_p(s)$ independent sets is at most $\binom{n}{s}n^{-2s}\le n^{-s}$.

Consequently, the probability that for every $s\ge \log n(\log\log n)^5$, every subgraph of $G$ with $s$ vertices has more than $g_p(s)$ independent sets
is at least $$1-\sum_{s=\lceil \log n(\log\log n)^5\rceil}^n n^{-s}=1-o(1).$$
\end{proof}

For the proof of Theorem~\ref{thm-lbcom}, it suffices to use Chernoff concentration inequality in the following weak form.
\begin{theorem}[Chernoff inequality]\label{thm-chernoff}
Suppose $X$ is a sum of independent random 0-1 variables, and let $\mu=\mathbb{E}[X]$.
Then for every $t\ge 2\mu$,
$$\mathbb{P}[X\ge t]\le e^{-t/8}.$$
\end{theorem}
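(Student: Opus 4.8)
The plan is to use the standard exponential moment (Chernoff--Bernstein) method. Write $X=\sum_{i=1}^{n} X_i$ with the $X_i$ independent $0$--$1$ variables, and set $p_i=\mathbb{E}[X_i]$, so that $\mu=\sum_{i} p_i$. For any $\lambda>0$, Markov's inequality applied to the nonnegative variable $e^{\lambda X}$ gives
$$\mathbb{P}[X\ge t]\le e^{-\lambda t}\,\mathbb{E}[e^{\lambda X}]=e^{-\lambda t}\prod_{i=1}^{n}\mathbb{E}[e^{\lambda X_i}],$$
using independence in the last step. Since $\mathbb{E}[e^{\lambda X_i}]=1-p_i+p_i e^{\lambda}=1+p_i(e^{\lambda}-1)\le \exp\bigl(p_i(e^{\lambda}-1)\bigr)$ by the inequality $1+x\le e^x$, multiplying over $i$ yields $\mathbb{P}[X\ge t]\le \exp\bigl(-\lambda t+\mu(e^{\lambda}-1)\bigr)$.

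Rather than optimizing $\lambda$, I would simply substitute $\lambda=1$, which keeps the constants transparent. This gives $\mathbb{P}[X\ge t]\le \exp\bigl(-t+(e-1)\mu\bigr)$, and since $t\ge 2\mu$ we have $(e-1)\mu\le \tfrac{e-1}{2}\,t$, so the exponent is at most $-t\bigl(1-\tfrac{e-1}{2}\bigr)=-\tfrac{3-e}{2}\,t$. As $\tfrac{3-e}{2}>\tfrac18$, we conclude $\mathbb{P}[X\ge t]\le e^{-t/8}$, which is the claimed bound.

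There is essentially no obstacle here; the one point to be careful about is picking $\lambda$ so that the resulting constant clears $\tfrac18$ uniformly over the allowed range of $t$. The optimal choice $\lambda=\log(t/\mu)$ yields the sharper tail $\bigl(e\mu/t\bigr)^{t}\cdot e^{-\mu}$, but in this regime $t/\mu$ can be as small as $2$, so the optimized constant is worse than $\tfrac18$; the fixed choice $\lambda=1$ is both simpler and sufficient. The degenerate case $\mu=0$ (where $X=0$ almost surely, so the bound is trivial) is in any case covered by the same computation.
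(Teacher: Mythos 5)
Your proof is correct, and since the paper states this inequality as a known result without supplying a proof (it is a standard Chernoff-type bound), there is nothing to compare it against; the argument you give is one of the usual derivations. The exponential-moment step, the bound $\mathbb{E}[e^{\lambda X_i}]=1+p_i(e^{\lambda}-1)\le e^{p_i(e^{\lambda}-1)}$, the product over $i$, and the substitution $\lambda=1$ followed by $\mu\le t/2$ all check out, and indeed $(3-e)/2\approx 0.141>1/8$.

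One minor inaccuracy in your closing remark: the optimized choice $\lambda=\log(t/\mu)$ does not give a worse constant here. Writing $x=\mu/t\le 1/2$, the optimized exponent is $-t\,(x-1-\log x)$, and $x-1-\log x$ is decreasing on $(0,1]$, so its minimum over the allowed range is attained at $x=1/2$, giving $\log 2-\tfrac12\approx 0.193>1/8$. So the optimal bound clears $e^{-t/8}$ as well (in fact by a larger margin). This does not affect your proof, which is valid as written; the fixed choice $\lambda=1$ is simply a clean way to avoid the optimization.
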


We also need Turán's bound, rephrased in terms of the independence number.
\begin{theorem}\label{thm-tur}
Every $n$-vertex graph of average degree $d$ has independence number at least $\tfrac{n}{d+1}$.
\end{theorem}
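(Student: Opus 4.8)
The plan is to prove this (the Caro--Wei form of Turán's bound) by the random-ordering argument. First I would fix a uniformly random linear order $\prec$ on $V(G)$ and let $X$ be the set of vertices $v$ such that $v\prec u$ for every neighbour $u$ of $v$. If $u$ and $v$ are adjacent, then whichever of them comes earlier in $\prec$ violates the defining condition because of the other, so $u$ and $v$ cannot both lie in $X$; hence $X$ is an independent set for every choice of the order.

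Next I would compute $\mathbb{E}[|X|]$. For a fixed vertex $v$ of degree $d(v)$, the event $v\in X$ depends only on the relative order of $v$ and its $d(v)$ neighbours, and among these $d(v)+1$ vertices each is equally likely to be $\prec$-first; thus $\mathbb{P}[v\in X]=\tfrac{1}{d(v)+1}$. By linearity of expectation,
$$\mathbb{E}[|X|]=\sum_{v\in V(G)}\frac{1}{d(v)+1}.$$
Then I would apply convexity: the map $x\mapsto \tfrac{1}{x+1}$ is convex on $[0,\infty)$, so Jensen's inequality, applied to the uniform distribution on the multiset $\{d(v):v\in V(G)\}$ (whose mean is the average degree $d=\tfrac1n\sum_v d(v)$), gives
$$\frac1n\sum_{v\in V(G)}\frac{1}{d(v)+1}\ge \frac{1}{d+1},$$
i.e. $\mathbb{E}[|X|]\ge \tfrac{n}{d+1}$. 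Since $X$ is independent for every order and its expected size is at least $\tfrac{n}{d+1}$, some order yields an independent set of size at least $\tfrac{n}{d+1}$, which is exactly the claim $\alpha(G)\ge \tfrac{n}{d+1}$.

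There is no real obstacle here — this is a classical estimate — and the only step needing a word of justification is the convexity/Jensen bound (equivalently, the observation that $\sum_v \tfrac1{d(v)+1}$ is minimised, for fixed $n$ and fixed sum of degrees, when all degrees equal $d$); the rest is a direct computation. An alternative would be induction on $n$, deleting a vertex of minimum degree together with its neighbourhood, but tracking how the average degree changes under that deletion is slightly fiddlier, so I would prefer the random-ordering proof above.
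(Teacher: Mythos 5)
Your proof is correct; it is the standard Caro--Wei random-ordering argument, and every step (independence of $X$, $\mathbb{P}[v\in X]=\tfrac{1}{d(v)+1}$, and the Jensen/convexity step to pass from $\sum_v \tfrac{1}{d(v)+1}$ to $\tfrac{n}{d+1}$) is sound. The paper itself gives no proof of Theorem~\ref{thm-tur} --- it simply cites it as Tur\'an's bound rephrased in terms of the independence number --- so there is nothing to compare against; your argument would be a perfectly acceptable way to supply the omitted proof.
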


Let $A(s,n)=\min(s/15,\log^2 n)$.

\begin{lemma}\label{lemma-avgspa}
There exists a positive integer $n_0$ such that for all positive integers $n\ge n_0$ and $s\le n$ and every positive $p\le n^{-13/14}$,
$$\mathbb{P}[\alpha(G(s,p))\le A(s,n)]\le n^{-3s}.$$
\end{lemma}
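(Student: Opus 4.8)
The plan is to separate according to which of the two terms attains the minimum in $A(s,n)=\min(s/15,\log^2 n)$, and in each regime to combine Tur\'an's bound (Theorem~\ref{thm-tur}) with the elementary estimate $\binom{N}{k}p^k\le(eNp/k)^k$ on the probability that a $\mathrm{Bin}(N,p)$ variable is at least $k$, applied to the number of edges of the relevant (induced) random graph.

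\emph{Regime $s\le 15\log^2 n$} (so $A(s,n)=s/15$). If $s<15$ then $A(s,n)<1\le\alpha(G(s,p))$ holds deterministically for large $n$, so assume $s\ge 15$. If $G(s,p)$ has fewer than $7s$ edges, its average degree is below $14$, and Tur\'an's theorem gives $\alpha(G(s,p))>s/15$. Hence the event $\alpha(G(s,p))\le s/15$ forces $G(s,p)$ to contain at least $7s$ edges, which happens with probability at most $\binom{\binom{s}{2}}{7s}p^{7s}\le\bigl(\tfrac{esp}{14}\bigr)^{7s}$. Since $sp\le 15n^{-13/14}\log^2 n=o(n^{-1/2})$, the base is at most $n^{-1/2}$ for large $n$, so this probability is at most $n^{-7s/2}\le n^{-3s}$.

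\emph{Regime $s\ge 15\log^2 n$} (so $A(s,n)=\log^2 n$). Put $m=15\lfloor\log^2 n\rfloor$, so $m\le 15\log^2 n\le s$, and partition the vertex set of $G(s,p)$ into $k=\lfloor s/m\rfloor\ge 1$ parts $X_1,\dots,X_k$, each of size between $m$ and $2m$, which together cover all $s$ vertices. The events $\{\alpha(G[X_i])\le\log^2 n\}$ are mutually independent, being determined by disjoint sets of potential edges. Arguing exactly as above applied to $G[X_i]$: if $G[X_i]$ has fewer than $6|X_i|$ edges, its average degree is below $12$ and Tur\'an's theorem gives $\alpha(G[X_i])\ge|X_i|/13\ge m/13>\log^2 n$ for large $n$; therefore
$$\mathbb{P}\bigl[\alpha(G[X_i])\le\log^2 n\bigr]\le\binom{\binom{|X_i|}{2}}{6|X_i|}p^{6|X_i|}\le\Bigl(\tfrac{e|X_i|p}{12}\Bigr)^{6|X_i|}\le n^{-3|X_i|}$$
for large $n$, using $|X_i|p\le 2mp=o(n^{-1/2})$. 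Multiplying over the independent events and using $\sum_{i}|X_i|=s$ gives
$$\mathbb{P}\bigl[\alpha(G(s,p))\le\log^2 n\bigr]\le\prod_{i=1}^{k} n^{-3|X_i|}=n^{-3s},$$
and taking $n_0$ large enough to cover the finitely many thresholds above completes the proof.

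The only step needing genuine care is the bookkeeping in the second regime: the parts $X_i$ must be chosen to exhaust \emph{all} $s$ vertices — so that the exponents $3|X_i|$ sum to exactly $3s$, rather than to something weaker like $3(s-m)$ that one would get by working inside a single block of size $\Theta(\log^2 n)$ — while each $|X_i|$ must remain $\Theta(\log^2 n)$ so that the Tur\'an bound still exceeds $\log^2 n$. Choosing $m\le 15\log^2 n$ and splitting $s$ into $\lfloor s/m\rfloor$ nearly equal parts reconciles these two requirements; everything else is the routine verification that $n^{-13/14}\log^2 n=o(n^{-1/2})$.
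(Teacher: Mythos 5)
Your proof is correct, and it handles the second regime by a genuinely different route than the paper. The paper splits on whether $sp\le n^{-3/7}$: in the sparse case it shows $\alpha>s/15$ by the same first-moment/Tur\'an argument you use, while in the other case it observes $s\ge\sqrt n$ (so $A(s,n)=\log^2n$) and applies Chernoff's inequality to the total edge count of $G(s,p)$ to show that the Tur\'an bound $\alpha>\log^2n$ fails with probability at most $e^{-s\sqrt n/8}$. You instead split on whether $s\le 15\log^2n$, and in the large-$s$ regime you never invoke any concentration inequality: you partition the $s$ vertices into $\Theta(s/\log^2 n)$ disjoint blocks of size $\Theta(\log^2 n)$, apply the elementary first-moment edge-count bound within each block to get failure probability $n^{-3|X_i|}$ per block, and exploit the mutual independence of the induced subgraphs to multiply these, recovering $n^{-3s}$ exactly. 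Your version is more elementary (it dispenses with \cref{thm-chernoff} entirely) and makes the exponent bookkeeping transparent — the $3|X_i|$'s visibly sum to $3s$ — at the cost of the partitioning step; the paper's version is shorter once Chernoff is available but requires the observation $s\ge\sqrt n$ to make the deviation large enough. Both approaches rest on the same two ingredients, Tur\'an's bound and the $\binom{N}{k}p^k$ estimate, and both correctly isolate $n^{-13/14}\log^2 n=o(n^{-1/2})$ as the numerical fact driving the exponent $3$.
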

\begin{proof}
Let $G=G(s,p)$.  If $sp\le n^{-3/7}$, we claim
that $\alpha(G)\ge s/15\ge A(s,n)$ with probability at least $1-n^{3s}$.  Indeed,
if $\alpha(G)<s/15$, then Theorem~\ref{thm-tur} implies that $G$ has average degree
more than $14s$, and thus it has more than $7s$ edges.  The probability that this happens
is at most
$$\binom{\binom{s}{2}}{7s}p^{7s}\le \left(\frac{es^2/2}{7s}\right)^{7s}p^{7s}\le (sp)^{7s}\le n^{-3s}.$$

Hence, suppose that $sp\ge n^{-3/7}$, and thus $s\ge n^{-3/7}p^{-1}\ge \sqrt{n}$. We claim that
$\alpha(G)\ge\log^2 n\ge A(s,n)$ with probability at least $1-n^{-3s}$.  Indeed, if $\alpha(G)<\log^2 n$, then
 Theorem~\ref{thm-tur} implies that $G$ has average degree more than $\tfrac{n}{\log^2 n}-1\ge 2\sqrt{n}$ when $n$ is sufficiently large.  Hence, $G$ has more than $s\sqrt{n}$ edges.
The expected number of edges of $G$ is $\binom{s}{2}p\le snp/2\le sn^{1/14}/2$,
and thus by Theorem~\ref{thm-chernoff}, the probability this happens is at most
$e^{-s\sqrt{n}/8}\le n^{-3s}$.
\end{proof}

\section{Proofs of the coloring results}

Our main results now straightforwardly follow.
\begin{proof}[Proof of Theorem~\ref{thm-ren}]
Without loss of generality, we can assume that $p<1$, as otherwise the graph $G=G(n,1-p)$ has no edges.
We aim to apply Observation~\ref{obs-ver} with the function $g_p(s)$.  This gives $s_\Delta=\lceil \exp(\sqrt{\log(1/p)\log\Delta})\rceil$.
Let
$$b(\Delta)=\Bigl\lfloor\frac{1}{6}\sqrt{\frac{\log\Delta}{\log 1/p}}\Bigr\rfloor-1.$$  We have
$$\binom{s_\Delta}{b(\Delta)}\le s_\Delta^{b(\Delta)}\le e^{2\sqrt{\log(1/p)\log\Delta}\cdot b(\Delta)}\le \frac{1}{4}\Delta^{1/3},$$
and thus (\ref{cond-main}) holds.  For $\Delta$ large enough, (\ref{cond-tub}) clearly holds as well.
Finally, $$(\log g_p(s))'=\frac{2}{3\log 1/p}\cdot \frac{\log s}{s}\ge \frac{2b(\Delta)}{s}$$ for $s\ge s_\Delta$, and thus (\ref{cond-der}) holds.

Note that a.a.s., the maximum degree $\Delta$ of $G$ is at least $(1-p)n/2$, and for $n$ large enough, we have $s_\Delta\ge \log n(\log \log n)^5$.
Therefore, by Corollary~\ref{cor-concall}, a.a.s. every subgraph $F$ of $G$ with $s\ge s_\Delta$ vertices satisfies $I(F)\ge g_p(s)$.
Then Observation~\ref{obs-ver} implies that $G$ is $b(\Delta)$-IS-rich, and Corollary~\ref{cor-col} implies that
the correspondence chromatic number of $G$ is $O(\Delta/b(\Delta))=O(n/\sqrt{\log n})$.
\end{proof}

\begin{proof}[Proof of Theorem~\ref{thm-lbcom}]
Let $\gamma=21$, let $b(n)=\tfrac{1}{\gamma}\log n$ and let $\ell(n)=\Theta(n/\log n)$ be the corresponding function from the statement of Theorem~\ref{thm-col}.
Consider a random $\ell(n)$-correspondence assignment $(L,M)$ for $K_n$.
By Theorem~\ref{thm-col}, it suffices to prove that a.a.s., $(L,M)$ is $b(n)$-IS-rich.  We say that a set $X$
of vertices of the cover graph $H$ of $(L,M)$ is \emph{plausible} if for every distinct $(u,i),(v,j)\in X$ we have $u\neq v$.
Note that the vertex set of every neighborhood subgraph of $H$ is plausible.  Hence, it suffices to show that a.a.s.,
every plausible set $X$ of size $s$ such that $\binom{s}{\le b(n)}>n^{1/3}$ satisfies $I(H[S])\ge 2\binom{s}{\le b(n)}$.

Consider a plausible set $X$ of size $s$.  Observe that $H[X]$ is a random graph $G(s,1/\ell(n))$, since every distinct $(u,i),(v,j)\in X$
form an edge independently iff they are matched in the uniformly random matching between $\{u\}\times \{1,\ldots,\ell(n)\}$ and
$\{v\}\times \{1,\ldots,\ell(n)\}$.  Note that there are at most $(n\ell(n))^s\le n^{2s}$ plausible sets of size $s$, and thus by
Lemma~\ref{lemma-avgspa}, the probability that there exists a plausible set $X$ of any positive size $s$ such that $\alpha(H[X])\le A(s,n)$
is at most $\sum_{s=1}^\infty n^{2s}\cdot n^{-3s}\le 2n^{-1}$.  Therefore, a.a.s. every plausible set $X$ of size $s$ satisfies
$\alpha(H[X])>A(s,n)$, and thus $I(H[X])>2^{A(s,n)}$.

Consider a plausible set $X$ of size $s$ satisfying $\binom{s}{\le b(n)}>n^{1/3}$.  This implies $2^s\ge \binom{s}{\le b(n)}>n^{1/3}$,
and thus $s\ge \tfrac{1}{3\log 2} \log n\ge 3b(n)$.  Let $m=es/b(n)$.  We have
$$n^{1/3}<\binom{s}{\le b(n)}\le 2\binom{s}{\lfloor b(n)\rfloor}\le 2m^{b(n)},$$
and thus
$$m>\frac{e^{\gamma/3}}{2^{\gamma/\log n}}\ge \frac{1}{2}e^{\gamma/3}\ge 500$$
for $n$ sufficiently large.  Hence,
$$2^{s/15}=2^{\frac{mb(n)}{15e}}\ge 2^{2+b(n)\log_2 m}=4m^{b(n)}\ge 2\binom{s}{\le b(n)}.$$
Moreover, $m\le n$, and thus
$$2^{\log^2 n}\ge 2^{2+b(n)\log_2 m}\ge 2\binom{s}{\le b(n)}.$$
Therefore,
$$I(H[X])>2^{A(s,n)}=2^{\min(s/15,\log^2 n)}\ge 2\binom{s}{\le b(n)},$$
and we conclude that $(L,M)$ is $b(n)$-IS-rich.
\end{proof}

\bibliographystyle{acm}
\bibliography{main}

\section*{Appendix -- proof of Theorem~\ref{thm-col}}

Let $\overline{\alpha}(F)$ denote the median size of an independent set in a
graph $F$, i.e., the largest integer $m$ such that $F$ has at least $I(F)/2$
independent sets of size at least $m$.  The following is analogous to Lemma~4.3
in~\cite{berndp}.

\begin{observation}\label{obs-median}
Let $G$ be a graph of maximum degree $\Delta$, and let $(L,M)$ be a $b$-IS-rich correspondence
assignment for $G$.  If $F$ is a neighborhood subgraph of the cover graph of $(L,M)$ and $I(F)\ge 2\Delta^{1/3}$,
then $\overline{\alpha}(F)>b$.
\end{observation}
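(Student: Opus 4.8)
The plan is to bound the number of independent sets of $F$ of size at most $b$ and show it is at most $\tfrac12 I(F)$; since $\overline{\alpha}(F)$ is by definition the largest integer $m$ for which $F$ has at least $I(F)/2$ independent sets of size at least $m$, this at once gives $\overline{\alpha}(F)\ge\lfloor b\rfloor+1>b$.

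The first observation is the crude bound that the number of independent sets of $F$ of size at most $b$ is at most the number of \emph{all} subsets of $V(F)$ of size at most $b$, namely $\binom{|V(F)|}{\le b}$. I would then split into two cases according to whether $F$ is $b$-large. If $F$ is $b$-large, then, since $F$ is a neighborhood subgraph of the cover graph of $(L,M)$ and $(L,M)$ is $b$-IS-rich, $I(F)\ge 2\binom{|V(F)|}{\le b}$, and hence the number of independent sets of size at most $b$ is at most $\binom{|V(F)|}{\le b}\le\tfrac12 I(F)$. If $F$ is not $b$-large, then $\binom{|V(F)|}{\le b}\le\Delta^{1/3}$, and the hypothesis $I(F)\ge 2\Delta^{1/3}$ again gives that the number of independent sets of size at most $b$ is at most $\binom{|V(F)|}{\le b}\le\tfrac12 I(F)$.

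In either case the number of independent sets of $F$ of size strictly greater than $b$ equals $I(F)$ minus the number of those of size at most $b$, hence is at least $I(F)-\tfrac12 I(F)=\tfrac12 I(F)$ (more precisely, being an integer, it is at least $\lceil I(F)/2\rceil$). Since every independent set of size strictly greater than $b$ has size at least $\lfloor b\rfloor+1$, we conclude $\overline{\alpha}(F)\ge\lfloor b\rfloor+1>b$. There is no genuine obstacle in this argument: it is just the counting dichotomy that mirrors the definition of "$b$-large" against the quantitative hypothesis $I(F)\ge 2\Delta^{1/3}$, and the only mild point of care is the integrality step relating "at most $\tfrac12 I(F)$ small independent sets" to "at least $\tfrac12 I(F)$ large ones".
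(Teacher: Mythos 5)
Your proof is correct and follows essentially the same route as the paper's: the dichotomy on whether $F$ is $b$-large (equivalently, whether $\binom{|V(F)|}{\le b}$ exceeds $\Delta^{1/3}$) is exactly the case split used there, and in both cases one obtains $I(F)\ge 2\binom{|V(F)|}{\le b}$ and then deduces that at least half the independent sets have size $>b$.
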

\begin{proof}
Let $s$ denote the number of vertices of $F$ and let $\beta=\binom{s}{\le b}$.
If $\beta\le\Delta^{1/3}$, then $I(F)\ge 2\Delta^{1/3}\ge 2\beta$.
If $\beta>\Delta^{1/3}$, then by $b$-IS-richness, we also have $I(F)\ge 2\beta$.
Since $\beta$ was chosen to be the number of subsets of $V(F)$ of size at most $b$,
it follows that $F$ has at least $I(F)-\beta\ge I(F)/2$ independent sets of size greater than $b$.
Consequently, we have $\overline{\alpha}(F)>b$ as required.
\end{proof}

For a correspondence assignment $(L,M)$, a \emph{partial $(L,M)$-coloring} of $G$ is a function $\varphi$
assigning colors to a subset $\dom(\varphi)$ of vertices of $G$ and satisfying the conditions of $(L,M)$-coloring,
i.e., $\varphi(v)\in L(v)$ for every $v\in \dom(\varphi)$ and for every $e=uv\in E(G)$ such that
$u,v\in\dom(\varphi)$, the vertices $(u,\varphi(u))$ and $(v,\varphi(v))$ are non-adjacent in $M_e$.
A color $c\in L(u)$ is \emph{available at $u$} with respect to a partial $(L,M)$-coloring
if for every $uv\in E(G)$ such that $v\in\dom(\varphi)$, the vertices $(u,c)$ and $(v,\varphi(v))$ are non-adjacent
at $M_e$; i.e., the color $c$ can be used to color $u$ without changing the rest of $\varphi$.
For fixed $\Delta$, let $A_u$ be the event that $u\not\in\dom(\varphi)$ and less than $\Delta^{7/12}$ colors are available at $u$.
The following estimate (Lemma~4.2(a) in~\cite{berndp}) is the main ingredient of Bernshteyn's proof.

\begin{lemma}\label{lemma-a}
For every function $b(\Delta)\le \Delta^{o(1)}$, the following claim holds for every sufficiently large $\Delta$.
Let $H$ be a graph of maximum degree at most $\Delta$, let $\ell\ge 5\Delta/b(\Delta)$ be an even integer, and let
$(L,M)$ be a correspondence assignment for $H$.  Let $u\in V(H)$ be a vertex adjacent to all other vertices of $H$.
If $(L,M)$ is $b(\Delta)$-IS-rich, $|L(u)|=\ell$ and $\varphi$ is a partial $(L,M)$-coloring of $H$ such that
$u\not\in \dom(\varphi)$ chosen uniformly at random, then $$\text{Pr}[A_u]<\frac{1}{8\Delta^3}.$$
\end{lemma}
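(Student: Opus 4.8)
The plan is to reconstruct the argument of Bernshteyn~\cite{berndp}, the point being that it uses the local structure of $H$ only through the neighborhood subgraphs of the cover graph, and hence only through $b$-IS-richness. Write $H'$ for the cover graph of $(L,M)$. Since $u$ is adjacent to every other vertex and $\varphi$ leaves $u$ uncolored, $\varphi$ is simply a uniformly random partial $(L,M)$-coloring of $H-u$, and $|V(H)|\le\Delta+1$. For a color $c\in L(u)$ put $F_c=H'[N_{H'}((u,c))]$; this is a neighborhood subgraph of $H'$, it contains at most one vertex of each layer $\{v\}\times L(v)$, and since every $M_{uv}$ is a matching the vertex sets $V(F_c)$, $c\in L(u)$, are pairwise disjoint. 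A color $c$ is available at $u$ exactly when $\varphi$ colors no vertex of $F_c$. Writing $T$ for the number of colors available at $u$, the goal is to show $\Pr[T<\Delta^{7/12}]<\tfrac{1}{8\Delta^3}$.

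The first, and main, step is to prove that $\mathbb{E}[T]\ge 2\Delta^{7/12}$ (and in fact $\mathbb{E}[T]$ should be considerably larger). Since $\mathbb{E}[T]=\sum_{c\in L(u)}\Pr[c\text{ available}]$, it suffices to bound $\Pr[c\text{ available}]=\Pr[\varphi\text{ colors no vertex of }F_c]$ from below for each $c$. The basic tool is the elementary fact that, conditioned on the restriction of $\varphi$ to $V(H)\setminus\{u,v\}$, the value $\varphi(v)$ is uniformly distributed over $\{\text{uncolored}\}$ together with the colors available at $v$ with respect to that restriction; this lets one estimate the probability that a given vertex receives a given color. This is the only place where the hypotheses are used: $b$-IS-richness, via Observation~\ref{obs-median} (which provides $\overline{\alpha}(\cdot)>b$ for the relevant neighborhood subgraphs), is what forces that at a typical vertex the blocked colors ``pile up'' — few distinct colors are blocked — so that the coloring is spread out across the $F_c$ and most of them remain untouched; and the hypothesis $\ell\ge 5\Delta/b(\Delta)$ provides the slack needed to absorb the resulting exponential factors. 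Carrying this out essentially as in~\cite{berndp} yields the claimed lower bound on $\mathbb{E}[T]$.

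The second step is concentration. If we change the color of a single vertex $v$ of $H-u$ (or color/uncolor it), the set of colors of $u$ that $v$ blocks changes by at most one color — namely the color of $u$ matched to $\varphi(v)$ — so $T$ changes by at most $1$. Thus $T$ is $1$-Lipschitz in the colors of the at most $\Delta$ vertices of $H-u$, and revealing these one vertex at a time an Azuma-type martingale inequality gives
$$\Pr\bigl[\,T<\mathbb{E}[T]-\Delta^{7/12}\,\bigr]\le\exp\!\Bigl(-\frac{\Delta^{7/6}}{2\Delta}\Bigr)=\exp\bigl(-\tfrac12\Delta^{1/6}\bigr)<\frac{1}{8\Delta^3}$$
for all sufficiently large $\Delta$. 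Since $\mathbb{E}[T]\ge 2\Delta^{7/12}$, the event $\{T<\Delta^{7/12}\}$ is contained in the event on the left, and the lemma follows.

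The main obstacle is the mean bound of the first step: establishing the local estimate on the probability that a vertex receives a particular color, i.e. that few colors are typically blocked at a given vertex, is the technical core of~\cite{berndp} and is precisely where $b$-IS-richness enters. A secondary point, also handled as in~\cite{berndp}, is that the uniform random partial coloring is not a product measure, so one must verify that the $1$-Lipschitz property still yields bounded martingale differences under the order of revelation used.
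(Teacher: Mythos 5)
Your proposal takes a genuinely different route from the paper, and it has gaps I do not think can be patched easily.

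The paper's proof never bounds $\mathbb{E}[T]$. It instead constructs the uniformly random partial colouring via a colour-by-colour resampling process: for $c=1,\dots,\ell$, it uncolours the vertices currently coloured $c$ that conflict with $(u,c)$, identifies the set $P_c$ of those that could now be given colour $c$, forms the neighbourhood subgraph $F_c$ of the cover graph on $\{(v,c):v\in P_c\}$, and colours a uniformly random independent set $S_c\subseteq V(F_c)$ with colour $c$. Each step preserves the uniform distribution, but the crucial point is that $S_c$ is a fresh uniform choice conditional on the history. It then defines indicators $a_i$ (over colours with $I(F_c)\ge 2\Delta^{1/3}$: whether $|S_c|\ge\overline{\alpha}(F_c)$, with conditional mean $\ge 1/2$) and $b_j$ (over colours with $I(F_c)<2\Delta^{1/3}$: whether $S_c=\emptyset$, with conditional mean $>1/(2\Delta^{1/3})$), applies Chernoff directly to these, and closes with a counting argument: $b$-IS-richness via Observation~\ref{obs-median} forces $\overline{\alpha}(F_c)>b(\Delta)$, so if more than $\ell/2$ colours had $I(F_c)\ge 2\Delta^{1/3}$ then $\ell/5$ of them would each contribute more than $b(\Delta)$ coloured vertices, exceeding $|V(H)\setminus\{u\}|\le\Delta$. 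Hence at least $\ell/2$ colours are ``small,'' and at least $\Delta^{7/12}$ of those end with $S_c=\emptyset$ and are available at $u$.

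Your first step --- establishing $\mathbb{E}[T]\ge 2\Delta^{7/12}$ --- is precisely the content of the lemma, and ``carry this out essentially as in~\cite{berndp}'' is circular, since this lemma \emph{is} Bernshteyn's Lemma~4.2(a); moreover Bernshteyn's argument does not go through the mean of $T$ at all. Your second step is a genuine gap rather than the secondary point you label it as. The uniform distribution over partial $(L,M)$-colourings of $H-u$ is not a product measure: the colours of adjacent vertices are constrained through the matchings $M_e$, so revealing $\varphi(v_i)$ changes the conditional law of $\varphi(v_{i+1}),\dots,\varphi(v_\Delta)$. For dependent coordinates, $1$-Lipschitzness of $T$ does not give $O(1)$-bounded increments of the Doob martingale $\mathbb{E}[T\mid\varphi(v_1),\dots,\varphi(v_i)]$ --- changing one revealed colour can shift the conditional expectation of $T$ over the remaining coordinates by far more than the pointwise Lipschitz constant --- so the Azuma step as written does not follow. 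The resampling construction in the paper exists exactly to replace this unavailable martingale argument with one in which the randomness is decomposed into genuinely fresh pieces, namely the independent choices $S_c$.
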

\begin{proof}
By renaming the colors, we can without loss of generality assume that $L(u)=\{1,\ldots,\ell\}$ and that for
each $c\in L(u)$ and each $v\in V(H)\setminus\{u\}$, if there exists an edge $(u,c)(v,c')\in M_{uv}$, then $c=c'$.
Consider the following way of choosing a random partial $(L,M)$-coloring of $H$:
\begin{itemize}
\item Let $\varphi_0$ be chosen uniformly at random among the partial $(L,M)$-colorings of $H$ such that $u\not\in \dom(\varphi)$.
\item For $c=1,\ldots, \ell$:
\begin{itemize}
\item Let $R_c=\{v\in V(H)\setminus\{u\}:(u,c)(v,c)\in M_{uv}\}$.
\item Let $\varphi'_{c-1}$ be the partial $(L,M)$-coloring of $H$ obtained from $\varphi_{c-1}$ by uncoloring
all vertices in $R_c$ that have color $c$, i.e., $\dom(\varphi'_{c-1})=\dom(\varphi_{c-1})\setminus \{v\in R_c\cap \dom(\varphi_{c-1}):\varphi_{c-1}(v)=c\}$.
\item Let $P_c$ be the set of vertices $v\in R_c\setminus \dom(\varphi'_{c-1})$ such that $c$ is available at $u$ with respect to
$\varphi'_{c-1}$.
\item Let $F_c$ be the subgraph of the cover graph of $(L,M)$ induced by $\{(v,c):v\in P_c\}$.
\item Let $S_c$ be an independent set in $F_c$ chosen uniformly at random.
\item Let $\varphi_c$ be obtained from $\varphi'_{c-1}$ by coloring by $c$ all vertices $v$ such that $(v,c)\in S_c$.
\end{itemize}
\item Let $\varphi=\varphi_\ell$.
\end{itemize}
Observe that when $\varphi_{c-1}$ is distributed uniformly, then so is $\varphi_c$.  Hence, the resulting coloring $\varphi$
is uniformly chosen among the partial $(L,M)$-colorings of $H$ such that $u\not\in \dom(\varphi)$.

Let us now define a random sequence $a_1$, $a_2$, \ldots as follows.  Let $c_1<c_2<\ldots<c_k$ be the sequence of all colors $c_i$ such that
in the described process, we have $I(F_{c_i})\ge 2\Delta^{1/3}$.  For $1\le i\le k$, let $a_i=1$ if $|S_{c_i}|\ge\overline{\alpha}(F_{c_i})$
and $a_i=0$ otherwise.  Since at least half of the independent sets in $F_{c_i}$ have size at least $\overline{\alpha}(F_{c_i})$
and $S_{c_i}$ is chosen uniformly at random, we have $\text{E}[a_i]\ge 1/2$ for each $i$. For $i>k$, choose $a_i\in \{0,1\}$ uniformly independently at random.
Then $a_1$, $a_2$, \ldots are independent random variables and by Chernoff inequality,
$$\text{Pr}\Bigl[\sum_{i=1}^{\ell/2} a_i<\ell/5\Bigr]\le \exp(-\Omega(\ell))\le \frac{1}{16\Delta^3}$$
for $\Delta$ sufficiently large.

Similarly, for $1\le j\le \ell-k$, let $c'_j$ be the $j$-th smallest color such that $I(F_{c'_j})<2\Delta^{1/3}$,
let $b_j=1$ if $S_{c'_j}=\emptyset$ and $b_j=0$ otherwise.  Since $S_{c'_j}$ is chosen uniformly at random from less than $2\Delta^{1/3}$ independent
sets, we have $\text{E}[b_j]>\frac{1}{2\Delta^{1/3}}$ for each $j$.  For $j>\ell-k$, choose $b_j\in \{0,1\}$ independently at random
so that $\text{E}[b_j]=\frac{1}{2\Delta^{1/3}}$.  Again using Chernoff inequality,
$$\text{Pr}\Bigl[\sum_{i=1}^{\ell/2} b_i<\Delta^{7/12}\Bigr]\le \text{Pr}\Bigl[\sum_{i=1}^{\ell/2} b_i<\frac{\ell}{3\Delta^{1/3}}\Bigr]\le \exp(-\Omega(\ell/\Delta^{1/3}))\le \frac{1}{16\Delta^3}$$
for $\Delta$ sufficiently large.

Consequently, with probability at least $1-\tfrac{1}{8\Delta^3}$, we have
\begin{align}
\sum_{i=1}^{\ell/2} a_i&\ge\ell/5\label{eq-a}\\
\sum_{i=1}^{\ell/2} b_i&\ge \Delta^{7/12}.\label{eq-b}
\end{align}
If $k\ge \ell/2$, (\ref{eq-a}) and Observation~\ref{obs-median} would imply that there are at least $\ell/5$ indices $i\le k$ such that
$|S_{c_i}|\ge\overline{\alpha}(F_{c_i})>b(\Delta)$,
and thus $\dom(\varphi)\ge \sum_{c=1}^\ell |S_c|>\ell b(\Delta)/5\ge \Delta\ge |V(H)\setminus\{u\}|$,
which is a contradiction.  Therefore, $k\le \ell/2$, and thus (\ref{eq-b}) implies that
there are at least $\Delta^{7/12}$ indices $j\le \ell-k$ such that $S_{c'_j}=\emptyset$.
These colors $c'_j$ are available at $u$ with respect to $\varphi$, and thus $A_u$ is false.
Hence, $\text{Pr}[\lnot A_u]\ge 1-\tfrac{1}{8\Delta^3}$ as required.
\end{proof}

\begin{corollary}\label{cor-a}
For every function $b(\Delta)\le \Delta^{o(1)}$, the following claim holds for every sufficiently large $\Delta$.
Let $G$ be a graph of maximum degree $\Delta$, let $\ell\ge 5\Delta/b(\Delta)$ be an even integer, and let
$(L,M)$ be a $b(\Delta)$-IS-rich $\ell$-correspondence assignment for $G$.  Let $u$ be a vertex of $G$ and let $\varphi_0$
be a partial $(L,M)$-coloring of $G-N[u]$.
If a partial $(L,M)$-coloring $\varphi$ of $G$ is chosen uniformly at random, then
$$\text{Pr}[A_u|\text{$\varphi_0$ is the restriction of $\varphi$}]<\frac{1}{8\Delta^3}.$$
\end{corollary}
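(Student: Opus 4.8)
The plan is to deduce Corollary~\ref{cor-a} from Lemma~\ref{lemma-a} by applying the latter to a suitable auxiliary graph obtained by ``freezing'' the partial coloring $\varphi_0$ on $G-N[u]$. First I would let $H$ be the subgraph of $G$ induced by $N[u]$, equipped with the correspondence assignment $(L',M')$ obtained from $(L,M)$ restricted to $H$ by deleting from each list $L'(v)$, for $v\in N(u)$, every color $c$ that is \emph{forbidden} at $v$ by $\varphi_0$, i.e.\ every $c$ such that there is some $w\in \dom(\varphi_0)$ with $vw\in E(G)$ and $(v,c)(w,\varphi_0(w))\in M_{vw}$. Since each $v\in N(u)$ has at most $\Delta$ neighbors, this deletes at most $\Delta$ colors from $L(v)$, so $|L'(v)|\ge \ell-\Delta\ge (1-b(\Delta)/5)\ell \ge 4\ell/5$; I keep $L'(u)=L(u)$ untouched, so $|L'(u)|=\ell$. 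The point is that partial $(L,M)$-colorings $\varphi$ of $G$ whose restriction to $G-N[u]$ equals $\varphi_0$ correspond bijectively, and in a measure-preserving way, to $\varphi_0$ together with a partial $(L',M')$-coloring of $H$ with $u\notin\dom$; moreover a color $c\in L(u)$ is available at $u$ with respect to $\varphi$ (in $G$) if and only if it is available at $u$ with respect to the corresponding partial $(L',M')$-coloring of $H$, because the only neighbors of $u$ in $G$ lie in $H$. Hence the conditional probability in the statement equals the unconditional probability of $A_u$ for a uniformly random partial $(L',M')$-coloring of $H$.

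Next I would check the hypotheses of Lemma~\ref{lemma-a} for $H$ and $(L',M')$. The vertex $u$ is adjacent to all other vertices of $H$ by construction. For the maximum degree, $\Delta(H)\le\Delta(G)=\Delta$. The slightly delicate point is that Lemma~\ref{lemma-a} wants $|L(u)|=\ell$ exactly and $\ell$ even, which we have; it also wants $\ell\ge 5\Delta/b(\Delta)$, again inherited. The one thing that genuinely needs an argument is IS-richness: I must verify that $(L',M')$ is $b(\Delta)$-IS-rich. This follows because the cover graph of $(L',M')$ is an induced subgraph of the cover graph of $(L,M)$ (we only deleted vertices $(v,c)$), and a neighborhood subgraph of the smaller cover graph is, in particular, a subgraph of a neighborhood of the corresponding vertex in the larger cover graph — so it is itself a neighborhood subgraph there; hence $b$-largeness and the $I(F)\ge 2\binom{|V(F)|}{\le b}$ bound are inherited verbatim from the $b$-IS-richness of $(L,M)$. (Here I am using that the cover graphs in question have the same maximum degree bound $\Delta$ needed in the definition of $b$-large, which holds since $\Delta(H)\le\Delta$; if one wants to be careful one notes $b$-largeness only becomes a weaker condition when $\Delta$ is replaced by a smaller value.)

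With all hypotheses in place, Lemma~\ref{lemma-a} gives that for a uniformly random partial $(L',M')$-coloring $\psi$ of $H$ with $u\notin\dom(\psi)$ we have $\Pr[A_u]<\tfrac{1}{8\Delta^3}$, which by the bijection described above is exactly $\Pr[A_u\mid \varphi_0\text{ is the restriction of }\varphi]<\tfrac{1}{8\Delta^3}$. I expect the main obstacle — or rather the main bookkeeping point to get exactly right — to be the claim that conditioning a uniformly random partial $(L,M)$-coloring $\varphi$ of $G$ on its restriction to $G-N[u]$ being $\varphi_0$ induces the \emph{uniform} distribution on the compatible extensions, and that these extensions are precisely the partial $(L',M')$-colorings of $H$ with $u$ uncolored: one has to be slightly careful that the deletion of forbidden colors is exactly what makes ``extends $\varphi_0$ to a partial $(L,M)$-coloring of $G$'' equivalent to ``is a partial $(L',M')$-coloring of $H$,'' since a coloring of $H$ could a priori create a conflict across an edge $vw$ with $v\in N(u)$ and $w\notin N[u]$, and it is exactly those conflicts that the modified lists $L'$ rule out. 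Once this equivalence is spelled out, the rest is immediate.
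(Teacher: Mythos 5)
Your proposal matches the paper's proof: restrict to $H=G[N[u]]$, pass to the correspondence assignment $(L',M')$ where $L'(v)$ retains exactly the colors not forbidden at $v$ by $\varphi_0$, observe that conditioning a uniformly random partial $(L,M)$-coloring of $G$ on restricting to $\varphi_0$ and on $u\notin\dom(\varphi)$ induces the uniform distribution on partial $(L',M')$-colorings of $H$ with $u\notin\dom$, note that $b(\Delta)$-IS-richness is inherited because the cover graph of $(L',M')$ is an induced subgraph of that of $(L,M)$ (so its neighborhood subgraphs are among the original ones, with the same $\Delta$), and invoke Lemma~\ref{lemma-a}. One side remark that does not affect the argument: your claimed bound $|L'(v)|\ge(1-b(\Delta)/5)\ell\ge 4\ell/5$ is false whenever $b(\Delta)>1$ (which is the typical regime), but you correctly observe that Lemma~\ref{lemma-a} only requires $|L'(u)|=\ell$ and places no lower bound on the other lists, so this is harmless; you should simply delete that estimate.
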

\begin{proof}
Let $H$ be the subgraph of $G$ induced by $N[u]$, for each $v\in V(H)$, let $L'(v)$ be the set of colors available
at $u$ with respect to $\varphi_0$, and let $M'=\{M'_e:e\in E(H)\}$, where for $e=xy$, $M'_e$ is the subgraph of $M_e$ induced
by $(\{x\}\times L'(x))\cup (\{y\}\times L'(y))$.  Selecting uniformly at random a partial $(L,M)$-coloring $\varphi$ of $G$
such that $\varphi_0$ is the restriction of $\varphi$ and $u\not\in \dom(\varphi)$ and taking the restriction of this coloring to $H$
results in a uniformly random partial $(L',M')$-coloring $\psi$ of $H$ such that $u\not\in \dom(\psi)$.  Moreover, the
validity of $A_u$ depends only on this restriction to $H$.  Hence, the bound follows from Lemma~\ref{lemma-a}.
\end{proof}

We also need another (simpler) probability bound.  For a partial $(L,M)$-coloring $\varphi$ and a set $S$ of vertices, let $B_S$ be the
event that $S\cap \dom(\varphi)=\emptyset$ and $A_x$ is false for every $x\in S$.
\begin{lemma}\label{lemma-b}
The following claim holds for every sufficiently large $\Delta$.
Let $G$ be a graph, let $(L,M)$ be a correspondence assignment for $G$, let $u$ be a vertex of $G$, and let $S$
be a subset of neighbors of $G$ of size $s=\lceil \Delta^{7/12}\rceil$.  
Let $\varphi_0$ be a partial $(L,M)$-coloring of $G-N[u]$.
Suppose $\varphi$ is a uniformly randomly chosen partial $(L,M)$-coloring of $G$.
Then $$\text{Pr}[B_S|\text{$\varphi_0$ is the restriction of $\varphi$}]<\frac{1}{8\Delta^3\binom{\Delta}{s}}.$$
\end{lemma}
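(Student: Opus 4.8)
The plan is to reduce the bound to a counting estimate about partial colourings of the single neighbourhood subgraph $H := G[N[u]]$, exactly as the proof of Corollary~\ref{cor-a} reduces Lemma~\ref{lemma-a} to that case. Conditioning on the restriction of $\varphi$ to $G - N[u]$ being $\varphi_0$, I replace each list $L(x)$ for $x \in V(H)$ by the set $L'(x)$ of colours available at $x$ with respect to $\varphi_0$ and restrict each matching $M_e$ accordingly to obtain $M'_e$; then the conditional distribution of $\varphi|_{N[u]}$ is uniform over the partial $(L',M')$-colourings of $H$ (the conditioning event has positive probability, since leaving $N[u]$ uncoloured extends $\varphi_0$ to a valid partial colouring of $G$). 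Under this identification, $B_S$ becomes the event that a uniformly random partial $(L',M')$-colouring $\psi$ of $H$ leaves every vertex of $S$ uncoloured, while each $x \in S$ has at least $\Delta^{7/12}$, hence at least $s = \lceil\Delta^{7/12}\rceil$, colours of $L'(x)$ available with respect to $\psi$.

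Next I would show that this probability is at most $1/s!$ by an injection argument. Fix any partial $(L',M')$-colouring $\psi$ of $H$ lying in this event, order $S = \{x_1, \ldots, x_s\}$, and extend $\psi$ by colouring $x_1, \ldots, x_s$ greedily: once $x_1, \ldots, x_j$ have received colours, the number of colours available at $x_{j+1}$ with respect to the extended colouring is at least $s - j \ge 1$, since $x_{j+1}$ had at least $s$ available colours with respect to $\psi$ and each of $x_1, \ldots, x_j$ (via the matching on the corresponding edge, which has maximum degree one) excludes at most one further colour. This produces at least $s(s-1)\cdots 1 = s!$ distinct partial $(L',M')$-colourings of $H$ in which every vertex of $S$ is coloured; since $\psi$ is recovered from any of them by deleting the colours on the vertices of $S$, the families of extensions arising from distinct $\psi$ are pairwise disjoint. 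Hence at most a $1/s!$ fraction of all partial $(L',M')$-colourings of $H$ lie in the event, i.e. $\Pr[B_S \mid \varphi_0 \text{ is the restriction of } \varphi] \le 1/s!$.

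Finally I would verify the purely numerical inequality $1/s! < \tfrac{1}{8\Delta^3\binom{\Delta}{s}}$, equivalently $8\Delta^3\binom{\Delta}{s} < s!$, for all sufficiently large $\Delta$. Using $\binom{\Delta}{s} \le (e\Delta/s)^s$ and $s! \ge (s/e)^s$, it suffices that $8\Delta^3 \le (s^2/(e^2\Delta))^s$, and since $s \ge \Delta^{7/12}$ gives $s^2/(e^2\Delta) \ge \Delta^{1/6}/e^2 \ge 2$ once $\Delta$ is large, the right-hand side is at least $2^s \ge 2^{\Delta^{7/12}}$, which dwarfs $8\Delta^3$. The only step requiring any real care is the reduction in the first paragraph — checking that conditioning on $\varphi_0$ genuinely yields the uniform distribution on partial $(L',M')$-colourings of $H$, and that ``available with respect to $\varphi$'' corresponds exactly to ``available with respect to $\psi$ among the colours of $L'$'' — but this is the same bookkeeping already carried out for Corollary~\ref{cor-a}, so I do not expect it to be an obstacle; everything else is elementary.
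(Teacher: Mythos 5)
Your proof is correct and is essentially the same as the paper's: the core idea in both is that any partial colouring in which every vertex of $S$ is uncoloured but has at least $s$ available colours can be extended greedily to at least $s!$ distinct partial colourings that colour all of $S$, giving the conditional bound $1/s!$, followed by the routine check that $s! \ge 8\Delta^3\binom{\Delta}{s}$. The only cosmetic difference is bookkeeping — you first pass to the $(L',M')$-assignment on $H=G[N[u]]$ (as in Corollary~\ref{cor-a}) and then run a global injection argument, while the paper conditions directly on the restriction $\varphi_1$ of $\varphi$ to $G-S$ and counts extensions of $\varphi_1$; these are equivalent presentations of the same counting.
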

\begin{proof}
Observe that if $A_x$ is false for every $x\in S$ and $S\cap \dom(\varphi)=\emptyset$, then at least $s$ colors
are available at each vertex of $S$ with respect to $\varphi$.  Hence, it suffices to show that
for every partial $(L,M)$-coloring $\varphi_1$ of $G-S$ such that $\varphi_0$ is the restriction of $\varphi_1$ and at least $s$ colors
are available at each vertex of $S$ with respect to $\varphi_1$, we have
$$\text{Pr}[S\cap \dom(\varphi)=\emptyset | \text{$\varphi_1$ is the restriction of $\varphi$}]<\frac{1}{8\Delta^3\binom{\Delta}{s}}.$$
Note that
\begin{equation}\label{eq-bnd}
(s!)^2\ge \exp\bigl(2s\log \tfrac{s}{e}\bigr)\ge \exp\bigl(\tfrac{8}{7}s\log \Delta\bigr)\ge 8\Delta^{s+3}
\end{equation}
for $\Delta$ sufficiently large.  Observe that $\varphi_1$
can be extended to a partial $(L,M)$-coloring of $G$ by giving colors to some vertices of $S$ in at
least $s!$ ways, and by (\ref{eq-bnd}),
$$s!\ge \frac{8\Delta^{s+3}}{s!}\ge 8\Delta^3\binom{\Delta}{s}.$$
Since all vertices of $S$ are uncolored in exactly one of them, the desired inequality follows.
\end{proof}

For a partial $(L,M)$-coloring $\varphi$ and a vertex $u$, let $B_u$ be the event that there are at least $\Delta^{7/12}$ neighbors $x$ of
$u$ such that $A_x$ is false and $x\not\in\dom(\varphi)$.  Using the union bound, Lemma~\ref{lemma-b} has the following consequence.
\begin{corollary}\label{cor-b}
The following claim holds for every sufficiently large $\Delta$.
Let $G$ be a graph of maximum degree $\Delta$ and let $(L,M)$ be a correspondence assignment for $G$.
Let $u$ be a vertex of $G$ and let $\varphi_0$ be a partial $(L,M)$-coloring of $G-N[u]$.
If $\varphi$ is a partial $(L,M)$-coloring of $G$ chosen uniformly at random, then
$$\text{Pr}[B_u|\text{$\varphi_0$ is the restriction of $\varphi$}]<\frac{1}{8\Delta^3}.$$
\end{corollary}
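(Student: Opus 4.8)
The plan is to deduce Corollary~\ref{cor-b} from Lemma~\ref{lemma-b} by a union bound over the possible sets of ``troublesome'' neighbors. Write $s=\lceil\Delta^{7/12}\rceil$ and fix $u$ and $\varphi_0$ as in the statement, and condition throughout on the event that $\varphi_0$ is the restriction of $\varphi$.

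First I would observe that the event $B_u$ is contained in $\bigcup_S B_S$, where $S$ ranges over all $s$-element subsets of the neighborhood $N(u)$ of $u$. Indeed, if $B_u$ occurs, then the number of neighbors $x$ of $u$ with $x\notin\dom(\varphi)$ and $A_x$ false is an integer that is at least $\Delta^{7/12}$, hence at least $s$; picking any $s$ of these neighbors gives a set $S\subseteq N(u)$ with $|S|=s$, $S\cap\dom(\varphi)=\emptyset$ and $A_x$ false for every $x\in S$, that is, $B_S$ occurs.

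Next, since $|N(u)|\le\Delta$, the number of such sets $S$ is at most $\binom{\Delta}{s}$. For each of them, Lemma~\ref{lemma-b} applies (its hypotheses are met: $|S|=s=\lceil\Delta^{7/12}\rceil$, $S$ is a set of neighbors of $u$, and $\varphi_0$ is a partial $(L,M)$-coloring of $G-N[u]$), giving
$$\text{Pr}[B_S\mid\text{$\varphi_0$ is the restriction of $\varphi$}]<\frac{1}{8\Delta^3\binom{\Delta}{s}}.$$
Summing these estimates over all at most $\binom{\Delta}{s}$ choices of $S$ and using $B_u\subseteq\bigcup_S B_S$ then yields
$$\text{Pr}[B_u\mid\text{$\varphi_0$ is the restriction of $\varphi$}]<\binom{\Delta}{s}\cdot\frac{1}{8\Delta^3\binom{\Delta}{s}}=\frac{1}{8\Delta^3},$$
which is exactly the claimed bound.

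I do not expect a genuine obstacle here, as the argument is a routine union bound; the only points needing a little care are rounding $\Delta^{7/12}$ up to the integer $s$ when extracting the set $S$ from the definition of $B_u$, and checking that the strict inequality survives the union bound — which it does, since each term is strictly bounded and the number of terms is a positive integer at most $\binom{\Delta}{s}$.
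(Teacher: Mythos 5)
Your argument is correct and is exactly what the paper has in mind: the paper simply states that Corollary~\ref{cor-b} follows from Lemma~\ref{lemma-b} by a union bound, which is precisely the decomposition $B_u\subseteq\bigcup_S B_S$ over $s$-subsets $S\subseteq N(u)$ that you carry out. The only trivial caveat (which does not affect correctness) is that if $|N(u)|<s$ there are zero such sets $S$, but then $B_u$ is impossible and the bound holds vacuously.
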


We use the lopsided version of Lovász Local Lemma in the following form~\cite{alon2016probabilistic}.
\begin{lemma}\label{lemma-lll}
Let $d$ be a positive integer.  Let $I$ be a finite set and for each $i\in I$, let $C_i$ be a random event.
Suppose that for every $i\in I$, there is a set $N_i\subseteq I$ of size at most $d$ such that
for every $Z \subseteq I \setminus N_i$,
$$\text{Pr}\Bigl[C_i|\bigwedge_{j\in Z} \lnot C_j\Bigr]\le \frac{1}{4d}.$$
Then
$$\text{Pr}\Bigl[\bigwedge_{i\in I} \lnot C_i\Bigr]>0.$$
\end{lemma}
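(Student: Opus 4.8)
The plan is to obtain Lemma~\ref{lemma-lll} as the special case of the general (asymmetric) lopsided Lov\'asz Local Lemma (see~\cite{alon2016probabilistic}) in which every event receives the same weight $x_i=\tfrac1{2d}$. Recall that the general lopsided form asks, for each $i\in I$, for a lopsidependency neighbourhood $\Gamma_i\subseteq I\setminus\{i\}$ and reals $x_i\in[0,1)$ such that
$$\text{Pr}\Bigl[C_i\,\Big|\,\bigwedge_{j\in S}\lnot C_j\Bigr]\le x_i\prod_{j\in\Gamma_i}(1-x_j)$$
holds for every $S\subseteq I\setminus(\{i\}\cup\Gamma_i)$, and then concludes $\text{Pr}\bigl[\bigwedge_{i\in I}\lnot C_i\bigr]\ge\prod_{i\in I}(1-x_i)$. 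So the whole task is to produce suitable $\Gamma_i$ and $x_i$ from the hypothesis of the lemma.

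First I would set $\Gamma_i=N_i\setminus\{i\}$, so that $|\Gamma_i|\le d$, and $x_i=\tfrac1{2d}$ for every $i$. For any $S\subseteq I\setminus(\{i\}\cup\Gamma_i)$ we have $S\subseteq I\setminus N_i$ regardless of whether $i\in N_i$ (and in the case $i\notin N_i$ one may even allow $S$ to contain $i$, where the conditional probability is simply $0$); hence the hypothesis of the lemma gives $\text{Pr}\bigl[C_i\mid\bigwedge_{j\in S}\lnot C_j\bigr]\le\tfrac1{4d}$. On the other hand, since all weights are equal,
$$x_i\prod_{j\in\Gamma_i}(1-x_j)=\frac1{2d}\Bigl(1-\frac1{2d}\Bigr)^{|\Gamma_i|}\ge\frac1{2d}\Bigl(1-\frac1{2d}\Bigr)^{d}\ge\frac1{2d}\cdot\frac12=\frac1{4d},$$
where the middle step uses $|\Gamma_i|\le d$ and the last step uses that $d\mapsto(1-\tfrac1{2d})^{d}$ is nondecreasing for integers $d\ge1$ and equals $\tfrac12$ at $d=1$ (the standard monotonicity of $(1-c/n)^n$ in $n$). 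Thus the hypothesis of the general lopsided LLL is verified, and it outputs $\text{Pr}\bigl[\bigwedge_{i\in I}\lnot C_i\bigr]\ge(1-\tfrac1{2d})^{|I|}>0$, as claimed.

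There is essentially no serious obstacle: the only point that needs a little care is the bookkeeping in matching the conditioning sets, i.e.\ checking that every $S$ admissible for the general lopsided LLL (disjoint from $\{i\}\cup\Gamma_i$) is admissible for the hypothesis of Lemma~\ref{lemma-lll} (disjoint from $N_i$), which reduces to the trivial case analysis above on whether $i\in N_i$; everything else is the elementary inequality $(1-\tfrac1{2d})^{d}\ge\tfrac12$. If a fully self-contained argument were preferred to quoting the general statement, the same weights feed into the classical Erd\H{o}s--Lov\'asz induction: prove by induction on $|S|$ that $\text{Pr}\bigl[\bigwedge_{j\in S}\lnot C_j\bigr]>0$ and that $\text{Pr}\bigl[C_i\mid\bigwedge_{j\in S}\lnot C_j\bigr]\le 2x_i$ whenever $i\notin S$, splitting $S$ according to membership in $\Gamma_i$ and applying the hypothesis to the part outside $\Gamma_i$ together with a union bound over the part inside $\Gamma_i$. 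Invoking the general lopsided LLL is cleaner, so that is the route I would take.
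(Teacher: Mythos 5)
The paper does not prove Lemma~\ref{lemma-lll} at all; it is stated as a known fact and cited to~\cite{alon2016probabilistic}. So there is no ``paper's own proof'' to compare against. Your derivation from the asymmetric lopsided Lov\'asz Local Lemma with uniform weights $x_i=\tfrac1{2d}$ and $\Gamma_i=N_i\setminus\{i\}$ is correct: every $S\subseteq I\setminus(\{i\}\cup\Gamma_i)$ is a subset of $I\setminus N_i$, so the lemma's hypothesis supplies the bound $\tfrac1{4d}$, and the inequality $\tfrac1{2d}\bigl(1-\tfrac1{2d}\bigr)^{d}\ge\tfrac1{4d}$ follows from the monotonicity of $(1-c/n)^n$ in $n$ with equality at $d=1$. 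This is exactly the standard way to extract the convenient symmetric form with constant $\tfrac1{4d}$ that the paper uses, so your argument supplies the (omitted) derivation the authors implicitly rely on.
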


For a vertex $u\in V(G)$, let $N^k[u]$ denote the set of vertices at distance at most $k$ from $u$.

\begin{proof}[Proof of Theorem~\ref{thm-col}]
Let $\Delta_0\ge 2$ be an integer such that Corollaries~\ref{cor-a} and~\ref{cor-b} hold for $\Delta\ge\Delta_0$,
and let us define $\ell(\Delta)=\max(\Delta_0,2\lceil \tfrac{5}{2}\Delta/b(\Delta)\rceil)$.  Let $G$ and $(L,M)$ be as in
the statement of the theorem.  If $\Delta<\Delta_0$, then $G$ can be $(L,M)$-colored greedily, since $\ell(\Delta)\ge \Delta_0>\Delta$.
Hence, suppose that $\Delta\ge \Delta_0$.

Consider any vertex $u\in V(G)$ and sets $Z,Z'\subseteq V(G)\setminus N^3[u]$.  Let $d=2\Delta^3$ and note that
$2|N^3[u]|\le d$.  The validity of $A_x$ for $x\in Z$
and $B_y$ for $y\in Z'$ depends only on the restriction of the partial coloring to $G-N[u]$, and thus Corollaries~\ref{cor-a} and~\ref{cor-b}
imply that for a partial $(L,M)$-coloring $\varphi$ of $G$ chosen uniformly at random,
\begin{align*}
\text{Pr}\Bigl[A_u|\bigwedge_{x\in Z} \lnot A_x\land \bigwedge_{y\in Z'} \lnot B_y\Bigr]&\le \frac{1}{4d}\\
\text{Pr}\Bigl[B_u|\bigwedge_{x\in Z} \lnot A_x\land \bigwedge_{y\in Z'} \lnot B_y\Bigr]&\le \frac{1}{4d}.
\end{align*}
By Lemma~\ref{lemma-lll}, with non-zero probability both $A_u$ and $B_u$ are false for every $u\in V(G)$.
This implies that each uncolored vertex has at least $\Delta^{7/12}$ available colors, but less than
$\Delta^{7/12}$ uncolored neighbors.  Hence, $\varphi$ can be greedily extended to an $(L,M)$-coloring of $G$.
\end{proof}

\end{document}